
\documentclass[11pt,a4paper,final, dvipsnames]{amsart}

\input xy
\xyoption{all}

\DeclareMathAlphabet{\mathpzc}{OT1}{pzc}{m}{it}

\usepackage{latexsym,amsmath,amsfonts,amscd,amssymb}
\usepackage{graphics,color}

\usepackage[all]{xy}
\usepackage{graphicx}
\usepackage{stmaryrd}
\usepackage{enumitem}
\usepackage{comment}

\usepackage{empheq}
\usepackage{color}
\usepackage{xcolor}
\usepackage{bm}

\setlength{\oddsidemargin}{15pt} \setlength{\evensidemargin}{15pt}
\setlength{\textwidth}{420pt} \setlength{\textheight}{630pt}
\setlength{\topmargin}{0pt}

\setlength{\parskip}{.15cm} \setlength{\baselineskip}{.5cm}

\theoremstyle{plain}
\newtheorem{theorem}{Theorem}[section]

\newtheorem{lemma}[theorem]{Lemma}
\newtheorem{proposition}[theorem]{Proposition}
\theoremstyle{definition}
\newtheorem{definition}[theorem]{Definition}
\newtheorem{remark}[theorem]{Remark}
\newtheorem{example}[theorem]{Example}
\newtheorem*{theorem*}{Theorem}

\newcommand{\red}{\mathrm{red}}
\newcommand{\irr}{\mathrm{irr}}

\newcommand{\g}{\gamma}

\newcommand{\G}{\Gamma}

\newcommand{\cA}{\mathcal{A}}

\newcommand{\cD}{\mathcal{D}}

\newcommand{\cL}{\mathcal{L}}

\newcommand{\RR}{\mathbb{R}}
\newcommand{\CC}{\mathbb{C}}

\newcommand{\PP}{\mathbb{P}}

\newcommand{\ZZ}{\mathbb{Z}}

\newcommand{\x}{\times}
\newcommand{\ox}{\otimes}

\newcommand{\la}{\langle}
\newcommand{\ra}{\rangle}
\newcommand{\frM}{\mathfrak{M}}

\newcommand{\frX}{\mathfrak{X}}

\newcommand{\Id}{\mathrm{Id}}
\newcommand{\id}{\mathrm{Id}}

\newcommand{\Sym}{\mathrm{Sym}}

\newcommand{\floor}[1]{\left \lfloor #1 \right \rfloor}

\DeclareMathOperator{\Hom}{Hom}

\DeclareMathOperator{\Gr}{Gr}
\DeclareMathOperator{\GL}{GL}

\DeclareMathOperator{\PGL}{PGL}
\DeclareMathOperator{\SL}{SL}
\DeclareMathOperator{\SU}{SU}

\DeclareMathOperator{\Stab}{Stab}
\DeclareMathOperator{\Spec}{Spec}
\DeclareMathOperator{\tr}{tr}

\makeatletter
\@namedef{subjclassname@2020}{%
  \textup{2020} Mathematics Subject Classification}
\makeatother


 \title[Representation varieties of twisted Hopf links]{Representation varieties of twisted Hopf links}
 
\subjclass[2020]{Primary: 57K31. Secondary: 14D20, 14C30}
\keywords{Hopf link, representation varieties, character varieties, E-polynomial.}
 
 \author[A. Gonz\'alez-Prieto]{\'Angel Gonz\'alez-Prieto}
\address{Departamento de \'Algebra, Geometr\'ia y Topolog\'ia, Facultad de Ciencias Matem\'aticas, Universidad Complutense de Madrid, 28040 Madrid, Spain.}
\address{Instituto de Ciencias Matem\'aticas (CSIC-UAM-UC3M-UCM), C.\ Nicol\'as Cabrera 15, 28049 Madrid, Spain}
\email{angelgonzalezprieto@ucm.es}
  
 \author[V. Mu\~{n}oz]{Vicente Mu\~{n}oz}
\address{Departamento de \'Algebra, Geometr\'ia y Topolog\'ia, Facultad de Ciencias Matem\'aticas, Universidad Complutense de Madrid, 28040 Madrid, Spain.}
 \email{vicente.munoz@ucm.es}

\usepackage{amsmath}

\begin{document}

\begin{abstract}
In this paper, we study the representation theory of the fundamental group of the complement of a Hopf link with $n$ twists. A general
framework is described to analyze the $\SL_r(\CC)$-representation varieties of these twisted Hopf links as byproduct of a combinatorial
problem and equivariant Hodge theory. As application, close formulas of their $E$-polynomials are provided for ranks $2$ and $3$, both for the
representation and character varieties.
\end{abstract}

\maketitle

\section{Introduction}\label{sec:introduction}

This work studies a special type of algebraic invariants of $3$-dimensional links. To be precise, given a link $L \subset S^3$ and a complex affine algebraic group $G$, we can form the so-called $G$-representation variety of the link
$$
	R(L, G) = \Hom(\pi_1(S^3-L), G),
$$
which parametrizes representations of the fundamental group of the link complement into $G$. This set can be naturally equipped with an algebraic structure in such a way that $R(L, G)$ becomes a complex affine variety. In particular, its cohomology is endowed with a mixed Hodge structure from which we can compute the $E$-polynomial
$$
	e(R(L,G)) = \sum_{k,p,q} (-1)^k h_c^{k,p,q}(R(L,G)) \, u^pv^q \in \ZZ[u,v],
$$
where $h^{k,p,q}_{c}(R(L,G))= h^{p,q}(H_{c}^k(R(L,G)))=\dim \Gr^{p}_{F}\Gr^{W}_{p+q}H^{k}_{c}(R(L,G))$ are the compactly supported Hodge numbers of $R(L,G)$. In the case that $h_c^{k,p,q}(R(L,G)) = 0$ for $p \neq q$, it is customary to write the $E$-polynomial in the variable $q = uv$.

Since the fundamental group of the link complement does not vary under diffeotopy of the link, the $E$-polynomial $e(R(L,G))$ is an algebraic invariant of the link $L$ up to link equivalence. This $E$-polynomial provides an invariant encoding the algebraic structure of the representation variety attached to $L$, and typically differs from other classical invariants of $L$ such as its Jones polynomial \cite{kauffman} or its $A$-polynomial \cite{CCGLS}. In fact, the geometry of the representation variety has been exploited several times in the literature to prove striking results of $3$-manifolds. 
For instance, in the foundational work of Culler and Shalen \cite{CS}, the authors used some simple properties of the $\SL_2(\CC)$-representation variety to provide new proofs of Thurston's theorem stating that the space of hyperbolic structures on an acylindrical $3$-manifold is compact, 
and of the Smith conjecture, which claims that any quotient with cyclic stabilizers of a closed oriented $3$-manifold with non-trivial branch knot is not simply-connected \cite[Section 5]{CS}.

Representation varieties also play a central role in mathematical physics. In the very influential paper \cite{witten1989quantum}, Witten applied Chern-Simons theory to geometrically quantize $\SU(2)$-representation varieties of knot complements, leading to a Topological Quantum Field Theory that computes the Jones polynomial of the knot. In some sense, our approach of looking at the $E$-polynomial of the representation variety can be understood as an alternative quantization of the representation varieties, more similar to 
Fourier-Mukai transforms in derived geometry \cite{Huy}, in the sense that it consists of a pull-push construction (with identity kernel), than to path integrals as arising in Chern-Simons theory \cite{witten1989quantum} (see \cite{GPLM-2017} for more information).

For these reasons, the computation of the 
$E$-polynomials $e(R(L,G))$ has been object of intense research in the recent years. 
The representation variety of torus knots for $G = \SL_2(\CC)$ was studied in \cite{Munoz}, and for $G=\SU(2)$ in \cite{Martinez-Munoz:2015}; 
whereas the $G=\SL_3(\CC)$ case was accomplished in \cite{MP}, and recently the case $G=\SL_4(\CC)$ in \cite{gonzalez2020motive} through a computer-aided proof. More exotic knots have also been studied, as the figure eight knot in \cite{HMP}. 
However, despite of these advances for representation varieties of knots, almost nothing is known in the case of links. The most studied case is the character variety of trivial links, i.e.\ representations of the free group, addressed in works such as \cite{florentino2009topology,Florentino-Lawton:2012,Lawton} (focused on the topology) and \cite{cavazos2014polynomial,Florentino-Nozad-Zamora:2019,LM} (computing the $E$-polynomials). Very recently, more complicated links were studied, such as the twisted Alexander polynomial for the Borromean link in \cite{ChenYu}.

The aim of this work is to give the first steps towards an extension of the techniques to links. In particular, we shall focus
on the ``twisted'' Hopf link $H_n$, obtained by twisting a classical Hopf link with $2$ crossings to get $2n$ crossings, as depicted in Figure \ref{fig0}.

\begin{figure}[ht]
\begin{center}
\includegraphics[width=4.5cm]{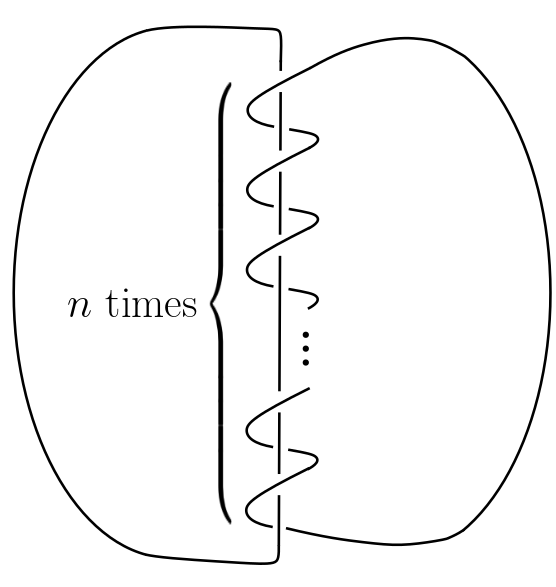}
\caption{\label{fig0} The twisted Hopf link of $n$ twists.}
\end{center}
\end{figure}

The fundamental group of the link complement of $H_n$ can be computed through a Wirtinger presentation (Proposition \ref{prop:fund-group-link}) giving rise to the group $\Gamma_n = \langle a, b \,|\, [a^n,b] = 1 \rangle$. Therefore, the associated $G$-representation variety is
$$
	R(H_n, G) = \left\{(A,B) \in G^2 \,|\, [A^n,B] = 1\right\}.
$$
In this sense, $R(H_n, G)$ should be understood as the variety counting ``supercommuting'' elements of $G$, generalizing the case $n=1$ of the usual Hopf link that corresponds to commuting elements, as studied in \cite{FlorentinoSilva,LMN}.

One of the main challenges we face in the study of the geometry of $R(H_n, G)$ is the analysis of the map $p_n: G \to G$, $A \mapsto A^n$. In this paper, we propose to split this analysis into two different frameworks, that we call the \emph{combinatorial} and the 
\emph{geometric}. The combinatorial 
setting focuses on the study of the configuration space of possible eigenvalues, and how it can degenerate under the map $p_n$. We will show in Section \ref{sec:combinatorial-setting} that understanding these degenerations can be done systematically, 
and eventually it is performed by means of a thorough application of the inclusion-exclusion principle.

The geometric setting is discussed in 
Section \ref{sec:geometric-setting}, where we show how the $E$-polynomial of the representation 
variety can be obtained from the possible Jordan forms. To this aim, 
both the stabilizer of $A^n$ in $\SL_r(\CC)$ under the conjugacy action (to parametrize the possible matrices $B$) 
and the stabilizer of $A$ (through the conjugacy orbit of the Jordan form) play a role.
Moreover, in the cases in which the 
Jordan form is not unique, but only unique up to permutation of eigenvalues, 
we show how the quotient by the corresponding symmetric group can be computed 
via equivariant Hodge theory, as developed in Section \ref{sec:equivariant-e-pol}.

To show the feasibility of this approach, we apply it to the cases of rank $2$ (Section \ref{sec:rank2}) and 
rank $3$ (Sections \ref{sec:rank3-combinatorial} and \ref{sec:rank3-geometric}), obtaining the main result of this paper.

\begin{theorem*}
The $E$-polynomials of the $\SL_r(\CC)$-representation variety of the twisted Hopf link $H_n$ with $n$ twists for ranks $r = 2,3$, are the following.
\begin{align*}
	e\big(R(H_n, & \SL_{2}(\CC))\big) =  \left( (n-1)q^2+ n q -n+5)\right)(q^3-q), \\
e\big(R(H_n, & \SL_{3}(\CC))\big) =\,   (q^3-1)(q^2-1)q^2 \Big( \floor{\frac{n}{2}} (q^2-q)(q^2-q-1) \\
 & +\frac12n^2 (q^7+2q^6+2q^5+q^4-3q^3-3q^2+2q) 
 -\frac12n (3q^7+6q^6-3q^4 -17q^3 \\ & \qquad -q^2+12q)  
 + q^7+2q^6-q^5-2q^4-6q^3+2q^2+13q\Big).
\end{align*}
\end{theorem*}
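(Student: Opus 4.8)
The plan is to compute each $E$-polynomial by stratifying $R(H_n,\SL_r(\CC))=\{(A,B)\in\SL_r(\CC)^2 : [A^n,B]=1\}$ according to the conjugacy type of $A$, and then fibering over the possible values of $A^n$. The starting point is the fibration $R(H_n,\SL_r)\to \SL_r$, $(A,B)\mapsto A$, whose fiber over $A$ is the stabilizer $Z(A^n)\subset\SL_r$ of the matrix $A^n$. Since the $E$-polynomial is motivic and multiplicative over Zariski-locally-trivial fibrations, it suffices to (i) partition $\SL_r$ into locally closed strata on which the conjugacy type of $A$ — and hence of $A^n$ — is constant, (ii) compute $e(Z(A^n))$ on each stratum, which depends only on the block structure (a product of $\GL_{k_i}$'s and an $\SL$ factor, whose $E$-polynomials are standard), and (iii) compute the $E$-polynomial of each stratum in $\SL_r$ itself, i.e.\ of the set of matrices with a prescribed Jordan form, which is a conjugation orbit $\SL_r/Z(A)$ or a family of such orbits parametrized by the eigenvalue configuration space.

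For rank $2$ this is short: the strata of $\SL_2$ are the central elements $\pm\Id$, the diagonalizable non-central elements (parametrized by an eigenvalue $\lambda\neq\pm1$ modulo $\lambda\leftrightarrow\lambda^{-1}$), and the two unipotent-times-central strata. On each stratum one records whether $A^n$ is central or regular semisimple: $A^n=\pm\Id$ happens precisely when $A$'s eigenvalue is a $2n$-th root of unity, which is the combinatorial input (a count of $n$-th and $2n$-th roots of unity giving the $(n-1)q^2+nq-n+5$ shape), and otherwise $Z(A^n)$ is a maximal torus. Assembling the pieces with the inclusion–exclusion bookkeeping of Section~\ref{sec:combinatorial-setting} yields the closed formula, the factor $q^3-q=e(\SL_2)$ appearing because $B$ ranges over a torus of $E$-polynomial $q-1$ against the orbit $\SL_2/T$ of $E$-polynomial $q^2+q$, or over all of $\SL_2$ when $A$ is central.

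For rank $3$ the same scheme applies but the eigenvalue configuration space is richer: $A$ may be regular semisimple (three distinct eigenvalues, orbit space modulo $S_3$), semisimple with a repeated eigenvalue (modulo $S_2$), central, or have a non-trivial Jordan block (regular or subregular nilpotent part). For each type one must determine the type of $A^n$ — governed entirely by which multiplicative relations $\lambda_i^n=\lambda_j^n$ or $\lambda_i^n=1$ hold among the eigenvalues — and this is exactly the combinatorial degeneration analysis of Sections~\ref{sec:combinatorial-setting} and~\ref{sec:rank3-combinatorial}, counting $n$-th roots of unity and resolving overlaps by inclusion–exclusion; the $\floor{n/2}$ term comes from the parity-sensitive coincidences $\lambda^n=(\lambda^{-1})^n$. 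The resulting quotients by $S_3$ or $S_2$ of the configuration spaces and the associated fibered contributions $e(Z(A^n))$ are then evaluated using the equivariant $E$-polynomial machinery of Section~\ref{sec:equivariant-e-pol}, and summed up via the geometric reduction of Section~\ref{sec:rank3-geometric}.

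The main obstacle is the rank $3$ case: the interplay between the symmetric-group quotients on the eigenvalue side and the stabilizer computations on the $B$ side does not factor cleanly, because the stratum of $\SL_3$ on which $A$ has (say) eigenvalues $\{\lambda,\lambda,\mu\}$ is a bundle over a configuration space that must be quotiented by $S_2$ while simultaneously $A^n$ may further degenerate to central, changing $Z(A^n)$ from $\GL_2\times\GL_1\cap\SL_3$ to all of $\SL_3$ on a sub-locus; keeping these degenerations, their inclusion–exclusion corrections, and their equivariant Hodge contributions consistent across all Jordan types is where the real work lies, and it is precisely what Sections~\ref{sec:rank3-combinatorial}–\ref{sec:rank3-geometric} are devoted to. The rank $2$ formula, by contrast, should fall out almost immediately once the root-of-unity count is in place.
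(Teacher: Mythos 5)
Your strategy is exactly the one the paper follows: stratify by the Jordan type of $A$, track how the type of $A^n$ degenerates via root-of-unity coincidences among eigenvalues, fiber the $B$-variable over the stabilizer of $A^n$ and the orbit over $\PGL_r(\CC)/\Stab(A)$, and resolve the residual symmetric-group quotients with equivariant $E$-polynomials. So there is no divergence of method to report. The problem is that, as written, the proposal is a plan rather than a proof: every quantity that actually appears in the stated formulas is left uncomputed. You never enumerate the types (in rank $3$ there are six, with refinement relations $\xi_4\to\xi_1$, $\xi_5\to\xi_2$, $\xi_6\to\xi_4$, $\xi_6\to\xi_1$ besides the identities), never carry out the inclusion--exclusion giving, e.g., $e(\Delta^3_{\sigma_3\to\sigma_1})=3n^2-9n+6$ or $e(\Delta^3_{\sigma_3\to\sigma_2}/\langle\tau\rangle)=\floor{\frac{n}{2}}(q-1)-\frac{3}{2}n(n-1)$, never compute the stabilizers, and never perform the equivariant tensor products in $R(S_2)$ and $R(S_3)$ whose $T$-components are what must be summed. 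Since the theorem is precisely a collection of explicit coefficients, deferring all of this to ``the bookkeeping of Sections 4, 7 and 8'' leaves the statement entirely unverified; nothing in the proposal would detect an error in any coefficient.

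There is also one concrete inaccuracy in your rank $2$ sketch: you say that away from the central locus $Z(A^n)$ is a maximal torus, but on the two non-diagonalizable strata ($A$ conjugate to $J_\pm$) the matrix $A^n$ is again a nontrivial Jordan block and its stabilizer in $\SL_2(\CC)$ is $\CC\times\mu_2$, with $E$-polynomial $2q$, not $q-1$; this is the source of the contribution $4(q^3-q)$ and hence of the constant $5$ in $(n-1)q^2+nq-n+5$. Relatedly, your heuristic that the torus fiber and the orbit $\SL_2/T$ simply multiply to give $q^3-q$ on the regular semisimple stratum is not justified as stated: the stratum is a quotient by $S_2$, and $e((X\times X')/S_2)$ is not $e(X/S_2)\,e(X'/S_2)$ in general. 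It happens to work here because $e_{S_2}(\PGL_2(\CC)/\cD)\otimes e_{S_2}(\widetilde{\Stab}(\xi_3))=(q^2T+qN)\otimes(qT-N)=(q^3-q)T$, but that cancellation has to be exhibited, which is exactly what the equivariant formalism you invoke is for.
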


Additionally, in this paper we will go a step forward and also study the associated character varieties. 
The key point is that, if we want to obtain a genuine moduli space, we must identify isomorphic representations.
This can be done by means of the GIT quotient of the representation variety $R(L,G)$ 
under the adjoint action of $G$, giving rise to the so-called character variety
 $$
 \frM(L,G) = R(L, G) \sslash G.
 $$

It is well known \cite{LuMa} that every representation is equivalent, under the GIT quotient, to a semi-simple representation.
The semi-simple representations are those that are direct sums of irreducible ones. Hence $\frM(L,G)$ is stratified
according to partitions of $r$, where $G=\SL_r(\CC)$, corresponding to representations that are sums of irreducible 
representations of the ranks given in the partition. The $E$-polynomial of the reducible locus $\frM^{\red}(L,G)$ is computed
inductively from the irreducible representations of lower ranks. 

In the case of the twisted Hopf link $H_n$, 
to compute the $E$-polynomial $e(\frM^{\irr}(H_n,G))$ we use the characterization that 
a representation is irreducible when $A,B$ do not both leave invariant a proper subspace.
This strategy is accomplished for rank $2$ (Section \ref{sec:rank2-character}) and  rank $3$ (Section \ref{sec:rank3-character}), 
leading to the following result.

\begin{theorem*}
The $E$-polynomials of the $\SL_r(\CC)$-character variety of the twisted Hopf link $H_n$ with $n$ twists for ranks $r = 2,3$, are the following.
\begin{align*}
	e\left(\frM(H_n, \SL_{2}(\CC))\right) &= q^2+1+(n-1)(q^2-q+1), \\
e(\frM(H_n, \SL_3(\CC))) &= q^4 + q^2 + 1 + \frac{1}{2}(n^2 - 3n + 2)\left(q^6 + 2q^5 - 4q^4 + q^3 + 3q^2 - 3q + 2\right) \\
& \qquad + 3(n - 1)(q^4 - q^3 + q^2 - q + 1) + (n - 1)(q-1)\left(q^3 - 2q^2 + q \right) \\
& \qquad -  \floor{\frac{n-1}2} (q^3 - 2q^2 + 1)(q-1).
\end{align*}
\end{theorem*}

It is worth mentioning that the strategies of computation described in this paper are not restricted to low rank, and work verbatim for arbitrary rank. However, the combinatorial analysis becomes exponentially more involved with increasing rank, so the higher rank cases are untreatable with a direct counting. An interesting future work would be to algorithmize the procedure of solving the combinatorial problem, so that the higher rank cases could be addressed via a computer aided-proof, as done in \cite{gonzalez2020motive} for torus knots.

Finally, we would like to point out that this work is cornerstone to the understanding of representation varieties of general 
$3$-manifolds. Recall that the Lickorish-Wallace theorem \cite{lickorish1962representation} states that any closed orientable 
connected $3$-manifold can be obtained by applying Dehn surgery around a link 
$L \subset S^3$. This highlights the importance of (i) studying representation varieties for 
general links, not only knots; and (ii) the key role that the maps $p_n(A)=A^n$ play in this project, since they appear as part of the automorphism of the fundamental group of the torus around which surgery takes place.

\noindent \textbf{Acknowledgements.} 
We are grateful to Joan Porti for useful correspondence. The authors greatly thank Javier Mart\'inez for fixing a mistake in the calculation of the $E$-polynomial of $\frM(H_n, \SL_3(\CC))$ in a previous version of this manuscript.
The first author is partially supported by Project MCI (Spain) 
PID2019-106493RB-I00 and the second author is partially supported by Project MCI (Spain) PID2020-118452GB-I00.

\section{Representation varieties and character varieties}\label{sec:character}

Let $\G$ be a finitely generated group, and let $G$ be a complex reductive Lie group. 
A \textit{representation} of $\G$ in $G$ is a homomorphism $\rho: \G\to G$.
Consider a presentation $\G=\la \g_1,\ldots, \g_k | \{r_{\lambda}\}_{\lambda \in \Lambda} \ra$, 
where $\Lambda$ is the (possibly infinite) indexing set of relations of $\G$. Then $\rho$ is completely
determined by the $k$-tuple $(A_1,\ldots, A_k)=(\rho(\g_1),\ldots, \rho(\g_k))$
subject to the relations $r_\lambda(A_1,\ldots, A_k)=\id$, for all $\lambda \in \Lambda$. The 
\emph{representation variety} is
 \begin{align}\label{eq:rep-var}
 \begin{split}
 R(\G,G) &=\,  \Hom(\G, G) \\
  &=\,  \{(A_1,\ldots, A_k) \in G^k \, | \,
 r_\lambda(A_1,\ldots, A_k)=\id ,\forall \lambda\, \}\subset G^{k}\, .
 \end{split}
 \end{align}
Therefore $R(\G,G)$ is an affine algebraic set. 
Even though $\Lambda$ may be an infinite set, $ R(\G,G)$ is defined by finitely many equations, as a consequence of
the noetherianity of the coordinate ring of $G^k$.

We say that two representations $\rho$ and $\rho'$ are
equivalent if there exists $g \in G$ such that $\rho'(\gamma)=g^{-1} \rho(\gamma) g$,
for every $\gamma \in \G$. 
The moduli space of representations, also known as the \emph{character variety}, is the GIT quotient
 $$
 \frM(\G,G) = R(\G,G) \sslash G \, .
 $$
Recall that by definition of the GIT quotient for an affine variety, if we write
$ R(\G,G)=\Spec A$, then $M (\G,G)=\Spec A^{G}$, where $A^G$ is the finitely generated $k$-algebra of invariant elements
of $A$ under the induced action of $G$.

A representation $\rho$ is \textit{reducible} if there exists some proper linear
subspace $W\subset V$ such that for all $\g \in \G$ we have 
$\rho(\g)(W)\subset W$; otherwise $\rho$ is
\textit{irreducible}. 
If $\rho$ is reducible, then there is a flag of subspaces $0=W_0\subsetneq W_1\subsetneq \ldots \subsetneq W_r=V$
such that $\rho$ leaves $W_i$ invariant, and it induces an irreducible representation $\rho_i$ in the quotient
$V_i=W_i/W_{i-1}$, $i=1,\ldots,r$. Then $\rho$ and $\hat \rho=\bigoplus \rho_i$
define the same point in the quotient $\frM(\G,G)$. We say that $\hat\rho$ is a semi-simple
representation, and that $\rho$ and $\hat\rho$ are 
S-equivalent. The space $\frM(\G,G)$ parametrizes semi-simple representations
\cite[Thm.~ 1.28]{LuMa} up to conjugation. 

The name `character variety' for $ \frM(\G,G)$ is justified by the following fact. Suppose now that $G=\SL_r(\CC)$. Given a representation $\rho: \G\to G$, we define its
\textit{character} as the map $\chi_\rho: \G\to \CC$,
$\chi_\rho(g)=\tr \rho (g)$. Note that two equivalent
representations $\rho$ and $\rho'$ have the same character.
There is a character map $\chi: R(\G,G)\to \CC^\G$, $\rho\mapsto
\chi_\rho$, whose image
 $$
 \frX(\G,G)=\chi(R(\G,G))
 $$
leads to a natural algebraic map  
 \begin{equation}\label{eq:map-character}
 \frM(\G,G)\to \frX(\G,G).
 \end{equation}
It turns out that this map is an  isomorphism for $G = \SL_{r}(\CC)$ cf.\ \cite[Chapter 1]{LuMa}. This is the same as to say that 
$A^G$ is generated by the traces $\chi_\rho$, $\rho\in R(\G,G)$. In other words, in this case $ \frM(\G,G)$ is made of characters, justifying its name. However, for other reductive groups the map (\ref{eq:map-character}) may not be an isomorphism, as for $G = \mathrm{SO}_2(\CC)$ \cite[Appendix A]{Florentino-Lawton:2012}. For a general discussion on this issue, see \cite{Lawton-Sikora:2019}.
 
\subsection{Hodge structures and $E$-polynomials} \label{subsec:e-poly}

A pure Hodge structure of weight $k$ consists of a finite dimensional complex vector space
$H$ with a real structure, and a decomposition $H=\bigoplus_{k=p+q} H^{p,q}$
such that $H^{q,p}=\overline{H^{p,q}}$, the bar meaning complex conjugation on $H$.
A Hodge structure of weight $k$ gives rise to the so-called Hodge filtration, which is a descending filtration
$F^{p}=\bigoplus_{s\ge p}H^{s,k-s}$. We define $\Gr^{p}_{F}(H):=F^{p}/ F^{p+1}=H^{p,k-p}$.

A mixed Hodge structure consists of a finite dimensional complex vector space $H$ with a real structure,
an ascending (weight) filtration $\cdots \subset W_{k-1}\subset W_k \subset \cdots \subset H$
(defined over $\RR$) and a descending (Hodge) filtration $F$ such that $F$ induces a pure Hodge structure of weight $k$ on each $\Gr^{W}_{k}(H)=W_{k}/W_{k-1}$. We define $H^{p,q}:= \Gr^{p}_{F}\Gr^{W}_{p+q}(H)$ and write $h^{p,q}$ for the {\em Hodge number} $h^{p,q} :=\dim H^{p,q}$.

Let $Z$ be any quasi-projective algebraic variety (possibly non-smooth or non-compact). 
The cohomology groups $H^k(Z)$ and the cohomology groups with compact support  
$H^k_c(Z)$ are endowed with mixed Hodge structures \cite{De}. 
We define the {\em Hodge numbers} of $Z$ by
$h^{k,p,q}_{c}(Z)= h^{p,q}(H_{c}^k(Z))=\dim \Gr^{p}_{F}\Gr^{W}_{p+q}H^{k}_{c}(Z)$ .
The $E$-polynomial is defined as 
 $$
 e(Z):=\sum _{p,q,k} (-1)^{k}h^{k,p,q}_{c}(Z) u^{p}v^{q}.
 $$

The key property of Hodge-Deligne polynomials that permits their calculation is that they are additive for
stratifications of $Z$. If $Z$ is a complex algebraic variety and
$Z=\bigsqcup_{i=1}^{n}Z_{i}$, where all $Z_i$ are locally closed in $Z$, then $e(Z)=\sum_{i=1}^{n}e(Z_{i})$.
Also $e(X\x Y)=e(X)e(Y)$ or, more generally, $e(X) = e(F)e(B)$ for any fiber bundle $F \to X \to B$ in the Zariski topology \cite[Proposition 4.6]{GP-2018b}.
Moreover, by \cite[Remark 2.5]{LMN} if $G\to X\to B$ is a principal fiber bundle with $G$ a 
connected algebraic group, then $e(X)=e(G)e(B)$.

When $h_c^{k,p,q}=0$ for $p\neq q$, the polynomial $e(Z)$ depends only on the product $uv$.
This will happen in all the cases that we shall investigate here. In this situation, it is
conventional to use the variable $q=uv$. If this happens, we say that the variety is {\it of balanced type}.
Some cases that we shall need are:
  \begin{itemize}
 \item $e(\CC^r)=q^r$.
 \item $e(\CC^*)=q-1$.
 \item $e(\GL_r(\CC))= (q^r-1)(q^r-q)\cdots (q^r-q^{r-1})$.
 \item $e(\SL_r(\CC))=e(\PGL_r(\CC)) = (q^r-1)(q^r-q) \cdots (q^r-q^{r-2})q^{r-1}$.
  \item $e(\PP^r)=q^r+\ldots+ q^2 + q+1$.
  \item $\Sym^r(\PP^1)=\PP^r$ hence $e(\Sym^r(\PP^1))=q^r+\ldots+q+1$.  
\item By \cite{GLM}, we have that $\zeta_{\PP^n}(t)=\sum_{r\geq 0} e(\Sym^r(\PP^n)) t^r$ satisfies the formula
  $$
 \zeta_{\PP^n}(t)=\prod_{i=0}^n \frac{1}{1-q^it}\, .
 $$
From this, we extract $e(\Sym^2(\PP^2))=q^4+q^3+2q^2+q+1$, and
$e(\Sym^3(\PP^2))=q^6+q^5+2q^4+2q^3+2q^2+q+1$.
  \end{itemize}

\subsection{Equivariant $E$-polynomial}\label{sec:equivariant-e-pol}

We enhance the definition of $E$-polynomial to the case where there is an action of a finite group (see \cite[Section 2]{LM}). 

\begin{definition}
Let $X$ be a complex quasi-projective variety on which a finite group $F$ acts. 
Then $F$ also acts on the cohomology $H^*_c(X)$ respecting the mixed Hodge structure. So
$[H^*_c(X)]\in R(F)$, the representation ring of $F$.  The \emph{equivariant 
$E$-polynomial} is defined as
 $$
 e_F(X)=\sum_{p,q,k}  (-1)^k [H^{k,p,q}_c(X)] \, u^pv^q \in R(F)[u,v].
 $$
\end{definition}

Note that the map $\dim : R(F)\to \ZZ$ recovers the usual $E$-polynomial as $\dim(e_F(X))=e(X)$.
Moreover, let $T$ be the trivial representation. Let $\ell$ be the number of irreducible 
representations of $F$, which coincides with the number of conjugacy classes of $F$. 
Let $T=T_1,T_2,\ldots, T_\ell$
be the irreducible representations in $R(F)$. Write $e_F(X)=  \sum_{j=1}^\ell a_j T_j$. Then
$e(X/F)= a_1$, the coefficient of $T$ in $e_F(X)$.

We need specifically the case of the symmetric group $S_r$.
For instance, for an action of $S_2$, there are two irreducible
representations $T,N$, where $T$ is the trivial representation, and $N$ is the non-trivial representation.
Then $e_{S_2}(X)=aT+bN$. Clearly  $e(X) = a+b$, $e(X/S_2) = a$. Therefore
 \begin{equation} \label{eqn:S2}
 \begin{aligned}
 &e_{S_2}(X)=aT+bN, \\
 &a=e(X/S_2), \\ 
 &b=e(X)-e(X/S_2).
 \end{aligned}
 \end{equation}
Note that if $X,X'$ are spaces with $S_2$-actions, then 
writing $e_{S_2}(X)=aT+bN$, $e_{S_2}(X')=a'T+b'N$, we have
$e_{S_2}(X\x X')= (aa'+bb') T+ (ab'+ba')N$ and so
  $e((X\x X')/S_2)=aa'+bb'$.

We shall use later also the case of the symmetric group $F=S_3$. 
Denote by $\alpha=(1,2,3)$ the $3$-cycle and $\tau=(1,2)$ a
transposition. There are three irreducible representations
$T,S,D$, where $T$ is the trivial one, $S$ is the sign representation,  and $D$ is the standard rerpresentation.
The sign representation is one-dimensional $S=\RR$, where $\alpha\cdot x=x$ and $\tau\cdot x=-x$.
The standard representation is two-dimensional $D=\RR^2=\CC$, where $\tau\cdot z=\overline{z}$, $\alpha\cdot z=
e^{2\pi i/3} z$.
The multiplicative table of $R(S_3)$ is easily checked to be given by 
 \begin{align*}
  T\otimes T &=T,  &&   T\otimes S =S, \\
  T\otimes D &=D, &&  S\otimes S =T, \\
  S\otimes D &=D, &&  D\otimes D =T +S + D.
 \end{align*}

Let $X$ be a variety with an $S_3$-action. Then $e_{S_3}(X) =a T+bS + cD$. Then $e(X)=a+b+2c$ and 
$e(X/S_3)=a$. For the transposition $\tau=(1,2)\in S_3$, we have $T^\tau=\RR$, $S^\tau=0$ and $D^\tau=\RR$.
Thus $e(X/\la\tau\ra)=a+c$. This implies that
 \begin{equation} \label{eqn:S3}
 \begin{aligned}
 &e_{S_3}(X) =a T+bS + cD , \\
  &a = e(X/S_3), \\ 
 &b= e(X)-2 e(X/\la\tau\ra)+e(X/S_3), \\
 &c = e(X/\la\tau\ra)- e(X/S_3).  
 \end{aligned}
 \end{equation}

An interesting case that we will also apply in Section \ref{sec:rank3-geometric} is the following.

\begin{proposition}\label{prop:lemma-connected-group}
Let $G$ be a complex algebraic group equipped with an action of a finite group $\rho: F \to \textup{Inn}(G)$ acting by inner automorphisms. If $G$ is
connected, then
$$
	e_{F}(G) = e(G)T.
$$ 
\end{proposition}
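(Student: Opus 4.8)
The plan is to show that the action of $F$ on the cohomology $H^*_c(G)$ is trivial, so that $[H^{k,p,q}_c(G)]$ equals $h^{k,p,q}_c(G)$ times the trivial representation $T$ in $R(F)$, which immediately gives $e_F(G) = e(G)T$. The key observation is that the action comes from inner automorphisms: for each $f \in F$, the automorphism $\rho(f)$ of $G$ is conjugation by some element $g_f \in G$.

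First I would recall that if $\phi\colon G \to G$ is an inner automorphism, $\phi(x) = g x g^{-1}$, then $\phi$ is homotopic (indeed isotopic through automorphisms, but homotopy suffices) to the identity map of $G$. Concretely, since $G$ is connected, choose a path $\gamma\colon [0,1] \to G$ with $\gamma(0) = e$ and $\gamma(1) = g_f$; then $H(x,t) = \gamma(t) x \gamma(t)^{-1}$ is a homotopy between $\id_G$ and $\phi$. Here I am using that a connected complex algebraic group is path-connected in the analytic topology. Since homotopic maps induce the same map on cohomology (and on compactly supported cohomology, as $G$ is a manifold and these maps are proper homotopy equivalences — or one can just use that $H^*_c$ of a smooth oriented manifold is Poincaré dual to $H^*$, on which homotopy invariance is classical), we conclude that $\rho(f)^*$ acts as the identity on $H^*_c(G)$ for every $f \in F$.

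Next I would note that this identification is compatible with the mixed Hodge structure: the induced map on each $H^{k,p,q}_c(G)$ is the identity, because $\rho(f)$ is an algebraic automorphism and hence its action on cohomology is a morphism of mixed Hodge structures, which we have just shown is the identity on the underlying vector space. Therefore $[H^{k,p,q}_c(G)] = h^{k,p,q}_c(G) \cdot T$ in $R(F)$ for all $k,p,q$, and summing with signs gives
$$
e_F(G) = \sum_{k,p,q} (-1)^k h^{k,p,q}_c(G)\, u^p v^q\, T = e(G)\, T.
$$

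The main obstacle, or rather the point requiring the most care, is the claim that an inner automorphism of a connected group acts trivially on compactly supported cohomology: one must make sure the homotopy $H$ above is proper, or circumvent this by passing through ordinary cohomology via Poincaré duality (valid since $G$ is a smooth orientable manifold) together with the fact that the automorphism is orientation-preserving, as it is homotopic to the identity. Everything else is a formal consequence of the definition of the equivariant $E$-polynomial and the multiplicative structure of $R(F)$.
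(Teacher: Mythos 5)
Your proof is correct and follows essentially the same route as the paper: use connectedness to produce a path from the identity to the conjugating element, conclude that each inner automorphism is homotopic to the identity, and hence acts trivially on (compactly supported) cohomology compatibly with the mixed Hodge structure. You are in fact more careful than the paper's two-line argument about the proper-homotopy/Poincar\'e-duality subtlety for $H^*_c$ and about compatibility with the Hodge filtrations, but the underlying idea is identical.
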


\begin{proof}
Since $\textup{Inn}(G) = G/Z(G)$, if $G$ is connected then $\textup{Inn}(G)$ also is so. Hence, any inner automorphism is connected to the identity
through a path, meaning that any inner automorphism is homotopic to the identity. Then, for all $\tau \in F$, the map $\tau \cdot : G \to G$ is null-homotopic,
so it induces a trivial action in cohomology.
\end{proof}

 \section{Twisted Hopf links}

In this paper, we shall focus on the \emph{twisted Hopf link}, which is the link formed by two circles knotted as the Hopf link but with $n$ twists, as depicted in Figure \ref{fig1}. We will denote this knot by $H_n$. 
Notice that the link $H_1$ is the usual Hopf link.
 
\begin{figure}[ht]
\begin{center}
\includegraphics[width=6cm]{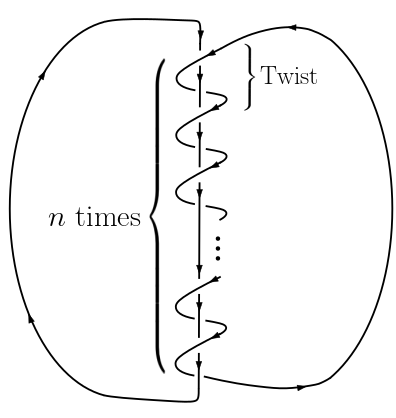}
\caption{\label{fig1} The twisted Hopf link of $n$ twists with oriented strands.}
\end{center}
\end{figure}

 \begin{proposition}\label{prop:fund-group-link}
 The fundamental group of the (complement of the) twisted Hopf link with $n \geq 1$ twists is
 $$
 \G_n:=\pi_1(S^3-H_n) = \langle a, b \,|\, [a^n,b] = 1 \rangle,
 $$
 where $[a^n,b] = a^nba^{-n}b^{-1}$ is the group commutator.
 \end{proposition}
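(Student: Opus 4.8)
The plan is to compute a presentation of $\pi_1(S^3 - H_n)$ directly from a diagram of the twisted Hopf link via the Wirtinger algorithm, and then simplify the resulting presentation using Tietze transformations. First I would fix a planar diagram of $H_n$ with $2n$ crossings (the standard "$n$ full twists" picture as in Figure \ref{fig1}), choose orientations for the two strands, and label the arcs $x_1, \dots, x_{2n}$ (together with the arcs coming from the two ends of the twist region). Each crossing contributes a Wirtinger relation of the form $x_{k+1} = x_j^{\pm 1} x_k x_j^{\mp 1}$, where $x_j$ is the overstrand at that crossing and the sign is determined by the sign of the crossing. Reading the diagram off, the arcs of one component at successive crossings are all conjugate to a single meridian $a$, and similarly all arcs of the other component are conjugate to a single meridian $b$; this is because in the twist region the two strands alternate as over/understrand, so each Wirtinger relation expresses one arc as a conjugate of an adjacent arc by a meridian of the \emph{other} component.

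The key computation is to iterate these conjugation relations through the twist region. Tracking, say, the $a$-component through the $2n$ crossings, the meridians get successively conjugated by $b^{\pm 1}$; after going through all $n$ twists one returns to the starting arc and obtains the single nontrivial relation, which after collecting the conjugations takes the form $(b^{-1})^n a (b^{-1})^{-n} = a$ up to relabeling, i.e.\ $b^n a b^{-n} = a$, equivalently $[b^n, a] = 1$. Renaming the generators (swapping the roles of $a$ and $b$, which is harmless since it amounts to choosing which component is which) yields $\G_n = \langle a, b \mid [a^n, b] = 1\rangle$. All the other Wirtinger relations are then seen to be redundant: they merely define the intermediate arcs $x_k$ in terms of $a$, $b$, and are eliminated by Tietze transformations, leaving only two generators and the single relation above. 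One should also note the base case $n = 1$: the diagram has $2$ crossings and the computation returns $[a, b] = 1$, so $\G_1 = \ZZ^2$, the well-known fundamental group of the Hopf link complement (which is $T^2 \times \RR$), providing a consistency check.

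The main obstacle is purely bookkeeping: one must be careful with the signs of the crossings and the direction of conjugation, since getting these wrong would produce a relation like $[a^n, b^n] = 1$ or $b^n a b^n = a \cdot(\text{junk})$ instead of the clean commutator relation. The cleanest way to handle this is to observe that the twist region is $n$ copies of a single full twist glued end to end, compute the effect of \emph{one} full twist on the pair of meridians (it conjugates the meridian of each component by the meridian of the other, or some such local rule), and then compose $n$ times; the telescoping of the conjugations is what collapses everything to $[a^n, b] = 1$. Alternatively, and perhaps more transparently, one can use the fact that $S^3 - H_n$ is a Seifert-fibered space (the complement of a $(2, 2n)$-torus link) or is obtained from the Hopf link complement by a $1/n$ Dehn twist along the torus, and read off the presentation from that structure — but the Wirtinger approach is self-contained and elementary, so that is what I would write up.
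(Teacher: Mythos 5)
Your overall strategy (Wirtinger presentation plus Tietze reductions) is exactly the paper's, but the central computational claim is wrong. You assert that as you pass through the twist region "the meridians get successively conjugated by $b^{\pm 1}$" for a \emph{fixed} meridian $b$, so that the relation telescopes to $b^n a b^{-n} = a$. In the Wirtinger calculus the conjugator at each crossing is the \emph{current} overstrand arc, and these arcs change from crossing to crossing: with the paper's labels the relations are $x_{k+1} = y_k x_k y_k^{-1}$ and $y_{k+1} = x_{k+1} y_k x_{k+1}^{-1}$, so the $k$-th conjugator is $y_k$ (resp.\ $x_{k+1}$), not $y_1$. When you telescope these correctly and impose $x_{n+1}=x_1$, $y_{n+1}=y_1$, the single surviving relation is $(x_1y_1)^n = (y_1x_1)^n$, and the generator $a$ of the final presentation is the \emph{product} $a = x_1y_1$ of the two meridians (with $b = y_1$), not a meridian of one component. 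Equivalently, the effect of one full twist on the pair of meridians is conjugation by the product $x_1y_1$, so $n$ twists conjugate by $(x_1y_1)^n$ — not by $y_1^n$.

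This is not merely a labelling issue: the relation you claim is actually false in $\G_n$ for $n \geq 2$. If $a,b$ are the two meridians then they generate $\G_n$, so $[b^n,a]=1$ would force $b^n$ to be central; but the center of $\G_n \cong \langle a \rangle *_{\langle a^n\rangle} (\langle a^n\rangle \times \langle b\rangle)$ is generated by $a^n = (x_1y_1)^n$, whose image in $H_1(S^3-H_n)\cong\ZZ^2$ is $(n,n)$ in the meridian basis, whereas $b^n$ maps to $(0,n)$. You land on the correct abstract presentation only because you guessed its shape; the derivation as written does not produce it. The fix is exactly the step you deferred as "bookkeeping": carry the varying conjugators through, obtain $\langle x_1,y_1 \mid (x_1y_1)^n=(y_1x_1)^n\rangle$, and then substitute $a=x_1y_1$, $b=y_1$.
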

 
 \begin{proof}
Let us compute the Wirtinger presentation of the fundamental group of $H_n$,
which is a presentation of the fundamental group of the complement of a knot that can be obtained algorithmically from the crossings of a planar representation of the knot \cite[Chapter III.D]{Rolfsen}. Let us orient the two strands of $H_n$ as shown in Figure \ref{fig1}.
From these orientation, we observe that $\pi_1(S^3-H_n)$ is generated 
by $2n$ elements, namely $x_1, y_1, \ldots, x_n, y_n$, corresponding to the $2n$ arcs of overpassing strands. For the relations, 
the knot has $n$ double crossings of the form of Figure \ref{fig2}.

\begin{figure}[ht]
\begin{center}
\vspace{-0.5cm}
\includegraphics[width=3cm]{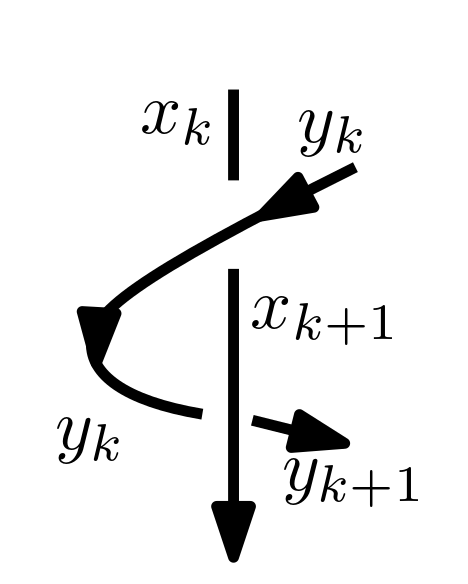}
\vspace{-0.5cm}
\caption{\label{fig2} A crossing of the $H_n$.}
\end{center}
\end{figure}

From each of these crossings, we obtain two relations $y_kx_k = x_{k+1}y_k$ and $x_{k+1}y_k = y_{k+1}x_{k+1}$ for $k = 1, 2, \ldots, n$, 
writing $x_{n+1} = x_1$ and $y_{n+1} = y_1$. Therefore, we get that
$$
\pi_1(S^3-H_n) = \langle x_1, y_1, \ldots, x_n, y_n \,|\, y_kx_k = x_{k+1}y_k, x_{k+1}y_k = y_{k+1}x_{k+1} \textrm{ for } 1\leq k \leq n\rangle.
$$
From this relations, we can solve for $y_k$ and $x_k$, for $k\geq 2$, from $x_1$ and $y_1$. Thus, the group can be also written as
$$
\pi_1(S^3-H_n) \cong \langle x_1, y_1 \,|\, (x_1y_1)^n = (y_1x_1)^n\rangle.
$$
Making the change $a = x_1y_1$ and $b = y_1$ we get the desired presentation.
 \end{proof}
 
 \begin{remark}
 For $n = 1$, the group $\pi_1(S^3-H_1) = \ZZ \times \ZZ$ coincides with the fundamental group of the $2$-dimensional torus, which is generated by two commuting elements. 
In some sense, $\pi_1(S^3-H_n)$ generalizes this result by considering `supercommutation' relations instead, of the form $a^nb=ba^n$. 
 \end{remark}

Using the description (\ref{eq:rep-var}) we directly get from Proposition \ref{prop:fund-group-link} that the $G$-representation variety of the twisted Hopf link with $n$ twists is
\begin{equation}\label{eq:rep-var-link}
	R(\Gamma_n, G) = \left\{(A, B) \in G^2 \,|\, [A^n,B] = 1\right\}.
\end{equation}

To emphasize the role of the twisted Hopf link in the representation variety, throughout this paper we shall denote $R(H_n, G) = R(\Gamma_n, G)$.

\section{The $\SL_{r}(\CC)$-representation variety of the twisted Hopf link} \label{sec:previous}

\subsection{The combinatorial setting}\label{sec:combinatorial-setting}

Given $r \geq 1$, let us consider the space of possible eigenvalues of a matrix of $\SL_{r}(\CC)$,
$$
	\Delta^r = \{(\alpha_1, \ldots, \alpha_r) \in (\CC^*)^r \,|\, \alpha_1\cdots \alpha_r = 1\}.
$$
We can see $\Delta^r$ as a (coarse) configuration space, which is naturally stratified by equalities $\alpha_i=\alpha_j$. 
Here, we only need to consider a simple case of the Fulton-MacPherson stratification. Given an 
equivalence relation $\sigma$ on $\{1, \ldots, r\}$ (equivalently, a partition of the set $\{1, \ldots, r\}$), 
let us denote by $\Delta^r_\sigma \subset \Delta^r$ the collection of $(\alpha_1, \ldots, \alpha_r)$ 
such that $\alpha_i = \alpha_j$ if and only if $i \sim_\sigma j$. Observe that if $\sigma = \{\upsilon_1, \ldots, \upsilon_s\}$, 
then there is a natural action of the group $S_\sigma := S_{t_1} \times \ldots \times S_{t_r}$ on $\Delta_\sigma^r$ 
by permutation of blocks, where $t_i$ is the number of  subsets $\upsilon_j$ of size $|\upsilon_j| = i$.

Two partitions $\sigma = \{\upsilon_1, \ldots, \upsilon_s\}$ and $\sigma' = \{\upsilon_1', \ldots, \upsilon_s'\}$
(with the same number of equivalence classes) are said to be equivalent if there exists a permutation of 
$\{1, \ldots, r\}$ sending $\upsilon_i$ to $\upsilon_i'$ for all $i$. In this manner, if we relabel the indices 
in such a way that $\upsilon_1 = \{1, \ldots, r_1\}, \upsilon_2 = \{r_1+1, \dots, r_1 + r_2\}$ and so on, 
we have a simple description
$$
	\Delta_\sigma^r = \left\{(\lambda_1, \lambda_2, \ldots, \lambda_s) \in (\CC^*)^s \,|\, 
	\lambda_1^{r_1} \lambda_2^{r_2} \ldots \lambda_s^{r_s} = 1, \lambda_i \neq \lambda_j \textrm{ for } i \neq j\right\}.
$$

\begin{example}
If $\sigma = \{\{1,2,3\}, \{4\}, \{5,6,7\}\}$, then
$$
	\Delta^5_\sigma = \{(\lambda_1, \lambda_2, \lambda_3)  \,|\, 
	\lambda_1^3\lambda_2\lambda_3^3 = 1, \lambda_1 \neq \lambda_2, \lambda_1 \neq \lambda_3, \lambda_2 \neq \lambda_3\},
$$
which is equipped with the action of $S_\sigma = S_2$ given by the map 
$(\lambda_1, \lambda_2, \lambda_3) \mapsto (\lambda_3, \lambda_2, \lambda_1)$. The partition $\sigma$ 
is equivalent, for instance, to the partition $\sigma' = \{\{1,2,3\}, \{4,5,6\}, \{7\}\}$.
\end{example}

For $n\geq 1$, there is a natural map $p_n: \Delta^r \to \Delta^r$ given by 
$p_n(\alpha_1, \ldots, \alpha_r) = (\alpha_1^n, \ldots, \alpha_r^n)$. We say that $\sigma'$ \textit{refines} $\sigma$
if 
the partition of $\sigma'$ is obtained from that of $\sigma$ by extra subdivisions. We indicate this
as $\sigma' \to \sigma$. If $\sigma'$ is a refinement of $\sigma$, let us denote 
 $$ 
 \Delta^r_{\sigma' \to \sigma} = \Delta^r_{\sigma'} \cap p_n^{-1}(\Delta^r_{\sigma}).
 $$ 

A key observation is that we 
can compute $e(\Delta^r_{\sigma' \to \sigma})$ recursively. 
For instance, for notational simplicity, 
let us suppose that $\sigma = \{\upsilon_1, \ldots, \upsilon_s\}$,
$\sigma' = \{\upsilon_1', \ldots, \upsilon_{s+t}'\}$, with
$\upsilon_{s} = \upsilon_{s}' \cup \upsilon_{s+1}' \cup \ldots \cup \upsilon_{s+t}'$ 
and $\upsilon_i = \upsilon_i'$ for $i < s$. In that case, we get that if $(\lambda_1, \ldots, \lambda_{s+t}) 
\in \Delta^r_{\sigma' \to \sigma}$, since $\lambda_{s}^n = \lambda_{s+1}^n = \ldots = \lambda_{s+t}^n$, 
we must have $\lambda_{s+k} = \lambda_s\varepsilon_k$ for all $k = 1, \ldots, t$ and some pairwise different 
roots of unit $\varepsilon_k \in \mu_{n}^*$. Here, we denote $\mu_n=\{e^{2\pi i k /n} |\, k=0,1,\ldots, n-1\}$ and $\mu_n^* = \mu_n - \{1\}$.
Hence,
$$
	\Delta^r_{\sigma' \to \sigma} = \left\{(\lambda_1, \ldots, \lambda_s, \varepsilon_1, \ldots, \varepsilon_t) \in (\CC^*)^s \times (\mu_n^*)^t \,\left|\, 
	\begin{matrix}\lambda_1^{r_1} \cdots \lambda_s^{r_s} \varepsilon_1^{r'_{s+1}} \ldots \varepsilon_t^{r'_{s+t}} = 1, \\ 
	\lambda_i \neq \lambda_j \varepsilon \textrm{ for } i \neq j, \varepsilon\in \mu_n , \\  
	\varepsilon_{k} \neq \varepsilon_{l} \textrm{ for } k \neq l 
	\end{matrix}\right.\right\}.
$$

Now, observe that if we remove any of the two later inequality conditions, we get a space of the form 
$\Delta^r_{\tilde{\sigma}' \to \tilde{\sigma}}$ for some coarser $\tilde{\sigma}'$ and $\tilde{\sigma}$. 
Therefore, we can simply compute the $E$-polynomial of the space
$$
	\left\{(\lambda_1, \ldots, \lambda_s, \varepsilon_1, \ldots, \varepsilon_t) \in (\CC^*)^s \times 
	(\mu_n^*)^t \,\left|\, \lambda_1^{r_1} \cdots \lambda_s^{r_s} \varepsilon_1^{r'_{s+1}} \ldots 
	\varepsilon_t^{r'_{s+t}} = 1\right.\right\},
$$
and then remove the strata corresponding to the different equalities using the inclusion-exclusion principle.

Finally, observe that the action of $S_{\sigma'}$ does not restrict to an action on $\Delta^r_{\sigma' \to \sigma}$. 
Instead, we find that there is a maximal subgroup $S_{\sigma' \to \sigma} < S_{\sigma'}$ acting on 
$\Delta^r_{\sigma' \to \sigma}$ namely, those permutations that preserve the refinement.

\subsection{The geometric setting} \label{sec:geometric-setting}

In this section, we show how the combinatorial set-up previously developed can be used to 
control important strata that appear in the representation varieties of Hopf links. To control the 
Jordan structure of the matrices, we need the following definition.

\begin{definition}
A \emph{Jordan type} of rank $r \geq 1$ is a tuple $\xi = (\sigma, \kappa)$ where $\sigma = \{\upsilon_1, \ldots, \upsilon_s\}$ 
is a partition of $\{1, \ldots, r\}$ and $\kappa = \{\tau_1, \ldots, \tau_s\}$ is a collection where 
$\tau_i$ is a partition of $\upsilon_i$ into \textit{linearly ordered} sets.

A type $\xi' = (\sigma', \kappa')$ is said to \textit{refine} $\xi = (\sigma, \kappa)$, and we shall denote it by $\xi' \to \xi$, 
if $\sigma'$ is a refinement of $\sigma$ and for any $\upsilon_i \in \sigma$ that decomposes as 
$\upsilon_i = \upsilon_{i_1}' \cup \ldots \cup \upsilon_{i_t}'$ in $\sigma'$ we have that $\tau_i = \tau'_{i_1} \cup \ldots \cup \tau'_{i_t}$.
\end{definition}

The rationale behind a Jordan type is that it codifies the block structure of a Jordan matrix. 
On the one hand, the partition $\sigma$ identifies the multiplicities of the eigenvalues: each of the sets $\upsilon_i$ of the partition corresponds to a collection of equal eigenvalues (each number identifies the column of the eigenvalue). On the other hand, $\kappa$ determines the inner structure of the Jordan blocks for each eigenvalue: each set of the partition $\tau_i$ corresponds to a block of the Jordan matrix associated to an eigenvalue. The total order within this set is needed to identify the eigenvector column (the last element) and the off-diagonal elements of the Jordan form: if $a$ is a succesor of
$b$, then there is a $1$ at the $(a,b)$-entry of the matrix.

\begin{example}\label{ex:jordan-types} To clarify this association, let us provide several examples.
\begin{itemize}
	\item The type $\xi_1=\big(\sigma_1=\{  \{1,2\}, \{3,4\}, \{5,6\} \} ,\tau_1=\{ \{(1,2)\}$, $\{(3), (4) \}, \{(5,6)\} \}\big)$ corresponds to Jordan matrices of the form
 $$
 A=\tiny\left( \begin{array}{cccccc} \lambda & 0 &0 &0&0& 0 \\  1 & \lambda & 0 &0 &0&0 \\ 0&0& \mu & 0 &0 & 0 \\
  0& 0&0& \mu & 0 &0  \\ 0 &0 & 0&0& \alpha & 0  \\ 0&  0&0&0&1& \alpha \end{array}  \right), 
  \qquad \normalsize \textrm{ with } \lambda \neq \mu, \lambda \neq \alpha, \mu \neq \alpha.
$$
	\item The type $\xi_2=\big(\sigma_2=\{  \{1,2, 3,4\}, \{5,6\} \} ,\tau_2=\{ \{(1,2), (3), (4) \}, \{(5,6)\} \}\big)$ corresponds
 to Jordan matrices of the form
 $$A=\tiny\left( \begin{array}{cccccc} \lambda & 0 &0 &0&0& 0 \\  1 & \lambda & 0 &0 &0&0 \\ 0&0& \lambda & 0 &0 & 0 \\
  0& 0&0& \lambda & 0 &0  \\ 0 &0 & 0&0& \alpha & 0  \\ 0&  0&0&0&1& \alpha \end{array}  \right), 
  \qquad \normalsize \textrm{ with } \lambda \neq \alpha.
$$
	\item The type $\xi_3=\big(\sigma_3=\{  \{1,5\}, \{3,4\}, \{2,6\} \} ,\tau_3=\{ \{(1,5)\}$, $\{(3), (4) \}, \{(2,6)\} \}\big)$ corresponds to Jordan matrices of the form
 $$
 A=\tiny\left( \begin{array}{cccccc} \lambda & 0 &0 &0&0& 0 \\  0 & \alpha & 0 &0 &0&0 \\ 0&0& \mu & 0 &0 & 0 \\
  0& 0&0& \mu & 0 &0  \\ 1 &0 & 0&0& \lambda & 0  \\ 0&  1&0&0&0& \alpha \end{array}  \right), 
  \qquad \normalsize \textrm{ with } \lambda \neq \mu, \lambda \neq \alpha, \mu \neq \alpha.
$$
\end{itemize}
\end{example}

There is a natural action of the symmetric group $S_r$ on the set of types by `permutation of columns': relabel each element of $\{1, \ldots, r\}$ according to the permutation both in $\sigma$ and $\tau_i$. We will say that two types are \emph{equivalent} if they lie in the same $S_r$-orbit. Notice that, by definition, in principle, for a type $\xi = (\sigma, \kappa)$, the partition $\sigma$ of $\{1,\ldots, r\}$ may not be into segments 
(i.e.\ equal eigenvalues may be sparse in the matrix). However, by `putting together' the Jordan blocks, there always exists an equivalent type for which 
$\sigma$ and each $\tau_i$ are made of segments, and the total order in $\tau_i$ agrees with the natural order in $\{1, \ldots, r\}$.

\begin{example}
The types $\xi_1$ and $\xi_3$ of Example \ref{ex:jordan-types} are equivalent under the permutation $\varphi = (2\,5)$. However, $\xi_1$ and $\xi_2$ are not equivalent.
\end{example}

Given a type $\xi$ of rank $r$, 
let us denote by $\cA_\xi$ the collection of Jordan matrices 
(with 1's as off-diagonal elements according to the total orders given by $\kappa$) of $\SL_{r}(\CC)$ whose block structure is $\xi$. 
If we allow any non-zero off-diagonal element in the entries determined by $\kappa$, we will refer to these matrices as generalized Jordan matrices.
The space of generalized Jordan matrices will be denoted $\cA^g_\xi$.
We also consider $\tilde{\cA}_{\xi} = \SL_{r}(\CC) \cdot \cA_\xi$, where the action is by conjugation.
If we pick a collection $\xi_1, \ldots, \xi_N$ of non-equivalent representatives of all the types of rank $r$, we get a decomposition
 $$
 \SL_{r}(\CC) = \tilde{\cA}_{\xi_1} \sqcup \ldots \sqcup \tilde{\cA}_{\xi_N}.
 $$

Given a type $\xi=(\sigma,\kappa)$, we have a group $S_\xi< S_\sigma$. This is the group of 
permutations of $\sigma=(\upsilon_1,\ldots,  \upsilon_s)$ that permute blocks $\upsilon_i$ whose decompositions
$\tau_i$ are equivalent (of the same number of sets and of the same sizes). The group $S_\xi$ acts on $\cA_\xi$ as follows:
let $\varphi \in S_\xi$ and $A\in  \cA_\xi$. So $\varphi$ is a permutation of the blocks $\upsilon_i$ of $\sigma$, that is, of the eigenvalues $\lambda_i$.
For such eigenvalues, the Jordan forms match exactly since the corresponding decompositions $\tau_i$ are equivalent, 
therefore the matrix $A'$ with the eigenvalues $\lambda_{\varphi(i)}$ lies in $\cA_\xi$ as well. Moreover, as this
is given by a change of the basis by permutation of the vectors, there is a matrix $P_\varphi$ such that 
$A'=P_\varphi^{-1}AP_\varphi$. This $P_\varphi$ is well defined in $\PGL_r(\CC)/\Stab(A)$ (quotient by action on the left). 
The action on $\PGL_r(\CC)/\Stab(A)$ is by product of $P_\varphi$ on the right.

Now, we consider the map
  $$
    p_n: \SL_{r}(\CC)  \to \SL_{r}(\CC) , \quad p_n(A) = A^n,
 $$
and we set 
 $$
 \begin{aligned}
  &{\cA}_{\xi' \to \xi} = \cA^g_{\xi'} \cap p_n^{-1}(\cA_{\xi}) , \\ 
  & \tilde{\cA}_{\xi' \to \xi} = \SL_{r}(\CC) \cdot {\cA}_{\xi' \to \xi}. 
  \end{aligned}
  $$

Moreover, if $\xi'$ refines $\xi$, then there is a group $S_{\xi' \to \xi}$ of permutations of $\xi'$ 
(that is, permutations $\varphi$ of $\sigma'$ that respect the blocks of $\xi'$) that induce permutations of $\xi$ (it is
enough that they induce a permutation on $\sigma$ under the refinement $\sigma'\to \sigma$).

\begin{example}
 Suppose we have types $\xi'=\big(\sigma'=\{  \{1,2\}, \{3,4\}, \{(5),(6)\} \} ,\tau'=\{ \{(1,2)\}$, $\{3,4 \}, \{(5,6)\} \}\big)$,
 $\xi=\big(\sigma=\{  \{1,2, 3,4\}, \{5,6\} \} ,\tau=\{ \{(1,2), (3), (4) \}, \{(5,6)\} \}\big)$  
 (cf.\ Example \ref{ex:jordan-types}). So $\xi'\to \xi$. This corresponds
 to matrices
 $$A=\tiny\left( \begin{array}{cccccc} \lambda & 0 &0 &0&0& 0 \\  1 & \lambda & 0 &0 &0&0 \\ 0&0& \lambda\varepsilon & 0 &0 & 0 \\
  0& 0&0& \lambda\varepsilon & 0 &0  \\ 0 &0 & 0&0& \alpha & 0  \\ 0&  0&0&0&1& \alpha \end{array}  \right), 
  \qquad \normalsize \textrm{ with } \varepsilon\in\mu^*_n \textrm{ and }\alpha \neq \lambda\epsilon, \forall \epsilon\in\mu_n.
$$
The group $S_{\xi'} =S_2$ permuting $\{1,2\}$ and $\{5,6\}$, whereas $S_{\xi'\to \xi }=\{1\}$. Note also that $S_{\sigma'}=S_3$.
\end{example}

\begin{lemma} \label{lem:A}
For any types $\xi = (\sigma, \kappa)$ and $\xi' = (\sigma', \kappa')$ we have that 
$\cA_{\xi' \to \xi} = \emptyset$ if $\xi'$ does not refine $\xi$, and 
$\cA_{\xi' \to \xi} \cong \Delta^r_{\sigma' \to \sigma}$ if it does. 
\end{lemma}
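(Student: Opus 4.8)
The plan is to show that requiring $A^n$ to be a \emph{genuine} Jordan matrix of type $\xi$ rigidifies a generalized Jordan matrix $A$ completely, the eigenvalues being the only surviving freedom; this will identify $\cA_{\xi'\to\xi}$ with $\Delta^r_{\sigma'\to\sigma}$ and, along the way, show emptiness unless $\xi'\to\xi$.

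First I would unwind $\cA^g_{\xi'}$. A matrix $A\in\cA^g_{\xi'}$ is block diagonal, $A=\bigoplus_j B_j$, with one block for each ordered set occurring in $\kappa'=\{\tau'_1,\ldots,\tau'_s\}$: if that set lies in $\tau'_i$ and has $m_j$ elements, then $B_j$ has size $m_j$, is lower triangular, has every diagonal entry equal to the common eigenvalue $\lambda_i$ of the class $\upsilon'_i$, and has nonzero subdiagonal (entries strictly below the subdiagonal being free). The eigenvalues obey only $\lambda_1^{r_1}\cdots\lambda_s^{r_s}=1$ (from $\det A=1$, with $r_i=|\upsilon'_i|$) and $\lambda_i\neq\lambda_j$ for $i\neq j$, so $(\lambda_1,\ldots,\lambda_s)\in\Delta^r_{\sigma'}$.

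Then I would analyze $p_n$ on one block. The key claim is: for $\lambda\in\CC^*$ and $m\geq 1$, if $B$ is a generalized Jordan block of size $m$ with eigenvalue $\lambda$, then $B^n$ is again a generalized Jordan block of size $m$, of eigenvalue $\lambda^n$, and exactly one such $B$, call it $B(\lambda,m)$, satisfies $B^n=J_m(\lambda^n)$, the standard Jordan block. For the first part, $B^n$ is lower triangular with constant diagonal $\lambda^n$, and $B^n-\lambda^nI=p(N)$ where $N=B-\lambda I$ is strictly lower triangular with nonzero subdiagonal and $p(x)=(\lambda+x)^n-\lambda^n=x\,q(x)$ with $q(0)=n\lambda^{n-1}\neq 0$; hence $p(N)=N\,q(N)$ with $q(N)$ invertible, so $p(N)$ has nilpotency index $m$, which for a lower triangular nilpotent matrix forces all subdiagonal entries nonzero. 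For the second part, any $B$ with $B^n=J_m(\lambda^n)$ commutes with $J_m(\lambda^n)$, hence lies in the local ring $\CC[J_m(\lambda^n)]\cong\CC[t]/(t-\lambda^n)^m$, in which $X^n=J_m(\lambda^n)$ has exactly $n$ solutions $g_\zeta\big(J_m(\lambda^n)\big)$, one per $n$-th root $\zeta$ of $\lambda^n$, where $g_\zeta$ is the branch of $z^{1/n}$ with $g_\zeta(\lambda^n)=\zeta$. Writing $J_m(\mu)=\mu I+N_0$, this gives $B(\lambda,m)=\sum_{k=0}^{m-1}c_k\lambda^{1-nk}N_0^k$ with universal constants $c_k$ (in particular $c_0=1$, $c_1=1/n\neq 0$), so the entries of $B(\lambda,m)$ are Laurent monomials in $\lambda$ and $B(\lambda,m)$ is indeed a generalized Jordan block.

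Finally I would assemble the blocks. For $A=\bigoplus_j B_j\in\cA^g_{\xi'}$ one has $A^n=\bigoplus_j B_j^n$, with each $B_j^n$ a generalized Jordan block of size $m_j$ and eigenvalue $\lambda_{i(j)}^n$; so $A^n\in\cA_\xi$ forces: (i) each $B_j^n$ is a genuine Jordan block, hence $B_j=B(\lambda_{i(j)},m_j)$ is determined by $\lambda_{i(j)}$; (ii) whatever the coincidences among the $\lambda_i^n$, the Jordan type of $A^n$ — its eigenvalue partition being the merge of $\sigma'$ identifying $\upsilon'_i$ with $\upsilon'_{i'}$ exactly when $\lambda_i^n=\lambda_{i'}^n$, and its block data the corresponding unions of the $\tau'_i$ in the induced order — is always a type refined by $\xi'$, so $A^n\in\cA_\xi$ is impossible unless $\xi'$ refines $\xi$, whence $\cA_{\xi'\to\xi}=\emptyset$ in that case (using here the conventions fixing the ordering of blocks within $\cA_\xi$ and the definition of refinement of types); (iii) when $\xi'$ refines $\xi$, membership $A^n\in\cA_\xi$ holds precisely when in addition $\lambda_i^n=\lambda_j^n\iff i\sim_\sigma j$, together with $\lambda_i\neq\lambda_j$ for $i\neq j$ and $\lambda_1^{r_1}\cdots\lambda_s^{r_s}=1$, i.e.\ $(\lambda_1,\ldots,\lambda_s)\in\Delta^r_{\sigma'}\cap p_n^{-1}(\Delta^r_\sigma)=\Delta^r_{\sigma'\to\sigma}$. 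Conversely, when $\xi'$ refines $\xi$, to any point of $\Delta^r_{\sigma'\to\sigma}$ one associates $A:=\bigoplus_j B(\lambda_{i(j)},m_j)$, which lies in $\cA_{\xi'\to\xi}$. These two assignments are mutually inverse; the map $A\mapsto(\lambda_1,\ldots,\lambda_s)$ merely reads entries off the diagonal, and its inverse is a morphism by the Laurent-monomial formula for $B(\lambda,m)$. Hence $\cA_{\xi'\to\xi}\cong\Delta^r_{\sigma'\to\sigma}$ as algebraic varieties.

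\textbf{Main obstacle.} The hard part will be the single-block analysis: showing that the $n$-th power of ``scalar $+$ regular nilpotent'' keeps the same shape, and establishing both the uniqueness and the \emph{algebraic} (Laurent-monomial) dependence on $\lambda$ of the generalized Jordan $n$-th root $B(\lambda,m)$ of $J_m(\lambda^n)$. A secondary, bookkeeping-heavy point is matching ``$A^n$ \emph{literally} lies in $\cA_\xi$'' (rather than merely being conjugate into it) with the combinatorial refinement relation $\xi'\to\xi$, keeping careful track of the ordered-set data in $\kappa'$ and $\kappa$ and of the ordering of the blocks.
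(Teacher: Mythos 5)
Your proof is correct and follows the same route as the paper's: a block-by-block analysis showing that powers of generalized Jordan blocks preserve the block structure (giving emptiness unless $\xi'\to\xi$), and that the condition $A^n\in\cA_\xi$ rigidifies all off-diagonal entries so that only the eigenvalues remain free. The paper compresses both steps into ``a direct computation shows'' and ``we observe that the off-diagonal elements are fixed''; your centralizer/local-ring argument (solving $X^n=J_m(\lambda^n)$ inside $\CC[J_m(\lambda^n)]\cong\CC[t]/(t-\lambda^n)^m$) is a valid and complete justification of exactly those assertions, not a different approach.
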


\begin{proof}
A direct computation shows that, since taking powers preserves the Jordan block structure, if $A \in \cA^g_{\xi'}$, then $A^n$ 
can only lie in strata of the form $\cA^g_{\xi}$ when $\xi'$ refines $\xi$. Hence, $p_n^{-1}(\cA_{\xi}) \cap \cA^g_{\xi'} 
= \emptyset$ if $\xi'$ does not refine $\xi$.

On the other hand, given $A \in \cA_{\xi' \to \xi}$ with $\xi'$ refining $\xi$, we observe that 
the off-diagonal elements of $A$ are fixed to coincide with the Jordan structure. In this manner, 
the only freedom we have is to choose the eigenvalues of $A$, and this is precisely $ \Delta^r_{\sigma' \to \sigma}$.
\end{proof}

Let us denote by
$$
	R(H_n, \SL_{r}(\CC))_{\xi \to \xi'} := \left\{(A, B) \in \tilde{\cA}_{\xi' \to \xi} \times \SL_{r}(\CC) \,|\, [A^n,B] = \Id \right\}.
$$
Observe that we have a natural stratification
$$
	R(H_n, \SL_{r}(\CC)) = \bigsqcup_{\xi', \xi} R(H_n,\SL_{r}(\CC))_{\xi' \to \xi} ,
$$
where each of the indices $\xi'$ and $\xi$ runs over a collection of non-equivalent representatives of all the types of rank $r$.
We denote by $\Stab(\xi')$ the stabilizer in $\PGL_{r}(\CC)$ of any matrix of $\cA_{\xi'}$, and 
$\widetilde{\Stab}(\xi)$ the stabilizer in $\SL_{r}(\CC)$ of any matrix of $\cA_{\xi}$. 
Note that the action of an element $\varphi\in S_{\xi'\to \xi}$ is by permutation of columns in either $\PGL_r(\CC)$, $\SL_r(\CC)$, 
$\PGL_{r}(\CC)/ \Stab(\xi')$ and $\widetilde{\Stab}(\xi)$.

\begin{proposition} \label{prop:lem:RG}
For any types $\xi = (\sigma, \kappa)$ and $\xi' = (\sigma', \kappa')$, we have that
$$
	R(H_n, \SL_{r}(\CC))_{\xi' \to \xi} \cong  
	\left.\left(\cA_{\xi' \to \xi} \times \big(\PGL_{r}(\CC)/ \Stab(\xi')\big) \times \widetilde{\Stab}(\xi) \right) \right/ S_{\xi' \to \xi} .
$$
\end{proposition}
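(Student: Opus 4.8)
The plan is to unravel the definition of $R(H_n,\SL_r(\CC))_{\xi'\to\xi}$ by fixing a base point and tracking the conjugation action. First I would fix a representative generalized Jordan matrix $A_0\in\cA_{\xi'\to\xi}$, so that $A_0^n\in\cA_\xi$, and recall that $\tilde\cA_{\xi'\to\xi}=\SL_r(\CC)\cdot\cA_{\xi'\to\xi}$ is the orbit of the locally closed subset $\cA_{\xi'\to\xi}$ under conjugation. The key structural point is that the conjugation action of $\SL_r(\CC)$ on $\tilde\cA_{\xi'\to\xi}$ has, at a matrix $PA_1P^{-1}$ with $A_1\in\cA_{\xi'\to\xi}$, the ambiguity measured by $\Stab(\xi')$ in $\PGL_r(\CC)$ (the stabilizer of $A_1$ being exactly the preimage of $\Stab(\xi')$, since scalars act trivially by conjugation), together with the residual ambiguity of which representative $A_1\in\cA_{\xi'\to\xi}$ one chooses in a given orbit — and by Lemma \ref{lem:A} the only freedom inside $\cA_{\xi'\to\xi}$ is the choice of eigenvalue tuple in $\Delta^r_{\sigma'\to\sigma}$, two such tuples giving conjugate matrices precisely when they differ by the permutation action of $S_{\xi'\to\xi}$ realized through the permutation matrices $P_\varphi$.

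Concretely, I would build the isomorphism as follows. Consider the map
$$
\cA_{\xi'\to\xi}\times\big(\PGL_r(\CC)/\Stab(\xi')\big)\times\widetilde{\Stab}(\xi)\;\longrightarrow\;R(H_n,\SL_r(\CC))_{\xi'\to\xi},
\qquad (A_1,\,[P],\,C)\longmapsto (PA_1P^{-1},\,PCP^{-1}).
$$
One must check this is well defined: changing $P$ by an element of $\Stab(\xi')$ on the right changes $A_1$ by an element of $\widetilde{\Stab}(\xi')\subset\widetilde{\Stab}(\xi)$ fixing $A_1$ (since the stabilizer of $A_1$ in $\SL_r$ maps onto $\Stab(\xi')$), so $PA_1P^{-1}$ is unchanged, and $PCP^{-1}$ is also unchanged because $C$ and the conjugating element both lie in $\widetilde{\Stab}(\xi)$, which is abelian-enough — more precisely the stabilizer subgroup commutes with itself so the product rearranges harmlessly; this needs a short verification. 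Next, $(PA_1P^{-1})^n=PA_1^nP^{-1}$ and $A_1^n\in\cA_\xi$, so requiring $[(PA_1P^{-1})^n,\,PCP^{-1}]=\Id$ is equivalent to $[A_1^n,C]=\Id$, i.e. to $C\in\widetilde{\Stab}(A_1^n)=\widetilde{\Stab}(\xi)$; hence the image indeed lands in the representation variety and, conversely, every $(A,B)$ in it arises this way by writing $A=PA_1P^{-1}$ with $A_1\in\cA_{\xi'\to\xi}$ and setting $C=P^{-1}BP$, which commutes with $A_1^n$.

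Finally I would identify the fibers of this map: $(A_1,[P],C)$ and $(A_1',[P'],C')$ have the same image iff $A_1,A_1'$ lie in the same $\SL_r$-orbit inside $\cA_{\xi'\to\xi}$ — equivalently their eigenvalue tuples differ by $S_{\xi'\to\xi}$ — and then, choosing $\varphi\in S_{\xi'\to\xi}$ with $A_1'=P_\varphi^{-1}A_1P_\varphi$, one gets $P'=PP_\varphi\cdot s$ for some $s\in\Stab(\xi')$ and $C'=P_\varphi^{-1}CP_\varphi$, which is exactly the stated $S_{\xi'\to\xi}$-action (permutation of columns simultaneously on $\PGL_r(\CC)/\Stab(\xi')$ and on $\widetilde{\Stab}(\xi)$). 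Thus the map descends to a bijective morphism from the quotient, and since all the pieces are constructed from orbit maps of algebraic group actions with the indicated finite quotient, it is an isomorphism of varieties. I expect the main obstacle to be the bookkeeping in the well-definedness and fiber computation — specifically, pinning down precisely how $\Stab(\xi')$ (in $\PGL_r$) versus the full stabilizer of $A_1$ in $\SL_r$ interact, and checking that the $S_{\xi'\to\xi}$-action via permutation matrices $P_\varphi$ is compatible on both the $\PGL_r(\CC)/\Stab(\xi')$ and the $\widetilde{\Stab}(\xi)$ factors simultaneously; the geometric content is otherwise straightforward orbit-counting.
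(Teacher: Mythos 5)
Your construction is the same as the paper's, read in the opposite direction: the paper starts from a pair $(A,B)$ in the stratum, writes $A=P^{-1}A_0P$ with $A_0\in\cA_{\xi'\to\xi}$, puts $B_0=PBP^{-1}\in\Stab(A_0^n)=\widetilde{\Stab}(\xi)$, and then identifies the residual ambiguity in the triple $(A_0,P,B_0)$ with the $S_{\xi'\to\xi}$-action, exactly as in your fiber analysis. Your derivation of why only $S_{\xi'\to\xi}$ (and not all of $S_{\xi'}$) survives --- via the requirement that the permuted matrix stay in $\cA_{\xi'\to\xi}$ --- is equivalent to the paper's condition that $B_0'$ remain in $\widetilde{\Stab}(\xi)$.

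There is, however, a genuine problem in your well-definedness step, and the fix you sketch is false. If $P$ is replaced by $PQ$ with $Q$ a lift of an element of $\Stab(\xi')=\Stab(A_1)$, then $PA_1P^{-1}$ is unchanged but $PCP^{-1}$ becomes $P(QCQ^{-1})P^{-1}$, and $QCQ^{-1}\neq C$ in general: $\widetilde{\Stab}(\xi)$ is not abelian and $\Stab(\xi')$ does not centralize it. For instance, for $\xi'=\xi_3$ and $\xi=\xi_1$ in rank $2$ one has $\Stab(\xi')=\cD$ and $\widetilde{\Stab}(\xi)=\SL_2(\CC)$, and conjugation by a diagonal matrix certainly moves elements of $\SL_2(\CC)$. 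So the map you write down does not descend to $\cA_{\xi'\to\xi}\times\big(\PGL_r(\CC)/\Stab(\xi')\big)\times\widetilde{\Stab}(\xi)$; the claim that ``the stabilizer subgroup commutes with itself so the product rearranges harmlessly'' is exactly the assertion that fails. What one actually obtains is the associated bundle $\cA_{\xi'\to\xi}\times\big(\big(\PGL_r(\CC)\times\widetilde{\Stab}(\xi)\big)/\Stab(\xi')\big)$, with $\Stab(\xi')$ acting on the second factor by conjugation; this is a Zariski-locally trivial fiber bundle over $\PGL_r(\CC)/\Stab(\xi')$ with fiber $\widetilde{\Stab}(\xi)$, so it has the same $E$-polynomial as the product and none of the subsequent computations are affected. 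The paper's own proof, which records only that ``$P$ is determined up to $\Stab(A_0)$'' without tracking the induced conjugation on $B_0$, elides the same point; but since you explicitly flagged this as the step needing verification and then resolved it with an incorrect commutativity claim, the gap remains yours to close.
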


\begin{proof}
Take $(A, B) \in \tilde{\cA}_{\xi' \to \xi} \times \SL_{r}(\CC)$ such that $[A^n,B] = \Id$. Then $A \in \SL_r(\CC) \cdot \cA_{\xi' \to \xi}$, that
is $A=P^{-1}A_0P$, with $A_0\in \cA_{\xi' \to \xi}=\cA^g_{\xi'} \cap p_n^{-1}(\cA_{\xi})$, i.e.\ 
$A_0\in \cA^g_{\xi'}$ and $A_0^n\in \cA_{\xi}$. The matrix $P$ is determined up to $\Stab(A_0)$. 
On the other hand, $B$ commutes with $A^n=P^{-1}A_0^n P$, hence $B_0=P B P^{-1}$ lies in $\Stab(A_0^n)=\widetilde{\Stab}(\xi)$. 

The pair $(A,B)$ is determined by $(A_0,P,B_0)$. This is not unique, the matrix $A_0$ can be changed by an equivalent
matrix $A_0'$ via an element $\varphi\in S_{\xi'}$. The associated triple is $(A_0'=P_\varphi^{-1}A_0P_\varphi, P'=P_\varphi^{-1}P, 
B_0'=P_\varphi^{-1}B_0 P_\varphi)$. In order for $B_0'$ to lie in the stabilizer of a matrix of $\cA_\xi$, it is needed that whenever two 
eigenvalues $\lambda_i,\lambda_j$ satisfy $\lambda_i^n=\lambda_j^n$, the permutation $\varphi$ moves then to eigenvalues
$\lambda_i',\lambda_j'$ such that $(\lambda_i')^n=(\lambda_j')^n$. This means that $\varphi$ respects the  partition $\sigma$,
that is, it lies in $S_{\xi'\to \xi}$.
\end{proof}

\section{Representation variety of the twisted Hopf link of rank $2$}\label{sec:rank2}

In this section, we shall compute the $E$-polynomial of the $\SL_{2}(\CC)$-representation variety 
of the twisted Hopf link $H_n$, that is, the space $R(H_n,\SL_2(\CC))$. 
For that purpose, we analyze the combinatorial and geometric settings as outlined in Section \ref{sec:previous}.

\subsection{Combinatorial setting}
In rank $2$ we have only $2$ possible partitions, up to equivalence. These are
$$
	\sigma_1 = \{\{1,2\}\}, \qquad \sigma_{2} = \{\{1\},\{2\}\}.
$$
The first one corresponds to $\SL_2(\CC)$ matrices with equal eigenvalues, and the second one with distinct eigenvalues.
Hence, we have
\begin{align*}
	e(\Delta^2_{\sigma_1}) &= e(\{\lambda \in \CC^*|\, \lambda^2 = 1\}) = e(\mu_2)=2, \\
	e(\Delta^2_{\sigma_2}) &= e(\{(\lambda_1, \lambda_2) \in (\CC^*)^2\,|\, \lambda_1\lambda_2 = 1, \lambda_1 \neq \lambda_2\}) 
	= e(\CC^* - \Delta^2_{\sigma_1}) = e(\CC^* - \mu_2)=q-3.
\end{align*}
Only the stratum $\Delta^2_{\sigma_2}$ has a non-trivial action of $S_2$, 
given by $(\lambda_1, \lambda_2) \mapsto (\lambda_2, \lambda_1)$.

For the degenerations, we have $e(\Delta^2_{\sigma_1 \to \sigma_1}) = e(\Delta^2_{\sigma_1}) = e(\mu_2) = 2$. In addition,
\begin{align*}
	e\left(\Delta^2_{\sigma_2 \to \sigma_1}\right) &= e\left(\{(\lambda_1, \lambda_2) \in (\CC^*)^2\,|
	\, \lambda_1\lambda_2 = 1, \lambda_1 \neq \lambda_2, \lambda_1^n = \lambda_2^n\}\right)  \\
	&= e\left(\{(\lambda_1, \varepsilon) \in \CC^* \times \mu_n\,|\, \lambda_1^2\varepsilon= 1, \varepsilon\neq 1\}\right)  \\
	&= e\left(\{(\lambda_1, \varepsilon) \in \CC^* \times \mu_n\,|\, \lambda_1^2\varepsilon = 1\}\right) 
	- e\left(\{\lambda_1 \in \CC^* \,|\, \lambda_1^2 = 1\}\right)  \\ 
	&= e\left(\mu_{2n}\right) - e\left(\mu_2\right) = 2n-2.
\end{align*}
And 
\begin{align*}
	e\left(\Delta^2_{\sigma_2 \to \sigma_2}\right) &= e\left(\{(\lambda_1, \lambda_2) \in (\CC^*)^2\,|
	\, \lambda_1\lambda_2 = 1, \lambda_1 \neq \lambda_2, \lambda_1^n \neq \lambda_2^n\}\right)  \\
	&=  e\left(\Delta^2_{\sigma_2}\right) -  e\left(\{(\lambda_1, \lambda_2) \in (\CC^*)^2\,|
	\, \lambda_1\lambda_2 = 1, \lambda_1 \neq \lambda_2, \lambda_1^n = \lambda_2^n\}\right) \\
	&= q-3 -  e\left(\Delta^2_{\sigma_2 \to \sigma_1}\right) = q-3 - (2n-2) = q-2n-1.
\end{align*}

Now, let us analyze the quotients by $S_2$. Recall that the action of $S_2$ on $\CC^*$ given by $\lambda \mapsto \lambda^{-1}$
has quotient $\CC^*/S_2 =\CC$, given by the invariant function $s=\lambda+\lambda^{-1}$. We have that
\begin{align*}
	e(\Delta^2_{\sigma_2}/S_2) &= e((\CC^* - \mu_2)/S_2) = e(\CC-\{\pm 2\})=q-2. \\
	e\left(\Delta^2_{\sigma_2 \to \sigma_1}/S_2\right) &= e\left(\mu_{2n}/S_2\right)  
	- e\left(\Delta^2_{\sigma_1}/S_2\right) = n+1-2=n-1.\\
	e\left(\Delta^2_{\sigma_2 \to \sigma_2}/S_2\right) &=  e\left(\Delta^2_{\sigma_2}/S_2\right) 
	 -  e\left(\Delta^2_{\sigma_2 \to \sigma_1}/S_2\right)  = (q-2) - (n-1) = q-n-1.
\end{align*}
Therefore, using (\ref{eqn:S2}) we get the equivariant $E$-polynomials:
\begin{equation}\label{eqn:sigma2}
\begin{aligned}
	e_{S_2}(\Delta^2_{\sigma_2}) &= (q-2)T - N. \\
	e_{S_2}\left(\Delta^2_{\sigma_2 \to \sigma_1}\right) &= (n-1)T + (n-1)N.\\
	e_{S_2}\left(\Delta^2_{\sigma_2 \to \sigma_2}\right) &=  (q-n-1) T -n N.
\end{aligned}
\end{equation}

\subsection{Geometric setting}

From the two partitions $\sigma_1$ and $\sigma_2$, we can create three types (up to equivalence), namely
 $$
 \xi_1 = (\sigma_1, \{1,2\}), \qquad \xi_2 = (\sigma_1, \{(1,2)\}), \qquad \xi_3 = (\sigma_2, \{\{1\}, \{2\}\}).
 $$
Recall that $\xi_1$ is the type of the diagonalizable matrices with equal eigenvalues (namely, $\pm \Id$), 
$\xi_2$ is the type of the two Jordan type matrices $J_{\pm}={\tiny \left(\begin{array}{cc} \pm 1 & 0  \\ 1 &\pm1 \end{array}\right)}$, 
and $\xi_3$ is the type of diagonalizable matrices with different eigenvalues.

Denote $R_{\xi_i  \to \xi_j}= R(H_n,\SL_2(\CC))_{\xi_i \to \xi_j}$.
Taking into account that the only refinement relations are $\xi_i \to \xi_i$, $i=1,2,3$,
and $\xi_3  \to \xi_1$, we have
$$
 R(H_n,\SL_2(\CC)) = R_{\xi_1 \to \xi_1} \sqcup R_{\xi_2 \to \xi_2} \sqcup 
 R_{\xi_3 \to \xi_3} \sqcup R_{\xi_3 \to \xi_1}.
$$

Counting for each stratum we have the following:
\begin{itemize}
 \item By Lemma \ref{lem:A} and Proposition  \ref{prop:lem:RG},
 $$
   R_{\xi_1 \to \xi_1} = \Delta^2_{\sigma_1 \to \sigma_1} \x \big( \PGL_{2}(\CC)/ \Stab(\xi_1) \big) \x \widetilde{\Stab}(\xi_1).
 $$
  Now $\Stab(\xi_1) =\PGL_2(\CC)$, $\widetilde{\Stab}(\xi_1) = \SL_{2}(\CC)$ and $e(\Delta^2_{\sigma_1 \to \sigma_1})=2$, hence
  $$
   e\left(R_{\xi_1 \to \xi_1}\right) = e(\Delta^2_{\sigma_1 \to \sigma_1}) e(\SL_{2}(\CC)) =  2(q^3-q).
  $$ 
 
 \item We have also
 $$
  R_{\xi_2 \to \xi_2} = \Delta^2_{\sigma_1 \to \sigma_1}\x \big( \PGL_{2}(\CC)/ \Stab(\xi_2)\big)\x\widetilde{\Stab}(\xi_2).
  $$
  Now $\Stab(\xi_2) ={\tiny \left\{\left(\begin{array}{cc} 1 & 0 \\ a & 1\end{array}\right)\right\}} \cong \CC$ 
  and  $\widetilde{\Stab}(\xi_2) ={\tiny \left\{\left(\begin{array}{cc} \pm 1 & 0 \\ a & \pm 1\end{array}\right)\right\}} \cong \CC\x\mu_2$. Hence
  $$
   e\left(R_{\xi_2 \to \xi_2}\right) = e(\Delta^2_{\sigma_1 \to \sigma_1}) \frac{q^3-q}{q} \, 2q =  4(q^3-q).
  $$ 
 
  \item We continue with $ R_{\xi_3 \to \xi_3}=\tilde R_{\xi_3 \to \xi_3} /S_2$, with
 $$
  \tilde R_{\xi_3 \to \xi_3} = \Delta^2_{\sigma_2 \to \sigma_2}\x \big( \PGL_{2}(\CC)/ \Stab(\xi_3)\big) \x\widetilde{\Stab}(\xi_3).
  $$
 To compute this, note that $\widetilde{\Stab}(\xi_3)  = 
 {\tiny \left\{\left(\begin{array}{cc} \alpha & 0 \\ 0 & \alpha^{-1} \end{array}\right)\right\}} \cong \CC^*$. The action of
 $S_2$ is given by $\alpha\mapsto \alpha^{-1}$. the quotient $\CC^*/S_2\cong \CC$ via the invariant function $s=\alpha+\alpha^{-1}$.
 So $e(\CC^*/S_2)=q$, and (\ref{eqn:S2}) yields the equivariant $E$-polynomial
  \begin{equation}\label{eqn:eS2D2}
  e_{S_2}(\widetilde{\Stab}(\xi_3))= qT-N .
  \end{equation}

Also $\PGL_2(\CC)/\Stab(\xi_3)=\PGL_2(\CC)/\cD$, where $\cD$ are the diagonal matrices. 
So $\PGL_2(\CC)/\cD \cong (\PP^1)^2-\Delta$, where $\Delta=\PP^1$ is the diagonal. Hence $e(\PGL_2(\CC)/\cD)=(q+1)^2-(q+1)=q^2+q$.
For the $S_2$-quotient, we have 
$(\PGL_2(\CC)/\cD)/S_2 \cong \Sym^2(\PP^1)-\Delta$, where $\Delta=\PP^1$. Hence $e((\PGL_2(\CC)/\cD)/S_2)=
(q^2+q+1)-(q+1)=q^2$. Using (\ref{eqn:S2}), this produces the equivariant $E$-polynomial
  \begin{equation}\label{eqn:PGL2D}
  e_{S_2}(\PGL_2(\CC)/\cD)= q^2T + qN .
  \end{equation}
  
Using the equivariant $E$-polynomial of $\Delta^2_{\sigma_2 \to \sigma_2}$ in (\ref{eqn:sigma2}), and (\ref{eqn:eS2D2}) and
(\ref{eqn:PGL2D}), we have
\begin{align*}
 e_{S_2}\big(\tilde  R_{\xi_3 \to \xi_3} \big) &= ((q-n-1) T -n N) \otimes (q^2 T + qN) \otimes (qT-N) \\
 &= \left((q-n-1)(q^3-q)\right) T - n(q^3-q)N.
 \end{align*}
Therefore $e(R_{\xi_3 \to \xi_3})= (q-n-1)(q^3-q)$, the coefficient of $T$ in the expression above.

\item  We end up with $R_{\xi_3 \to \xi_1}=\tilde R_{\xi_3 \to \xi_1}/S_2$, where
 $$
 \tilde R_{\xi_3 \to \xi_1} =  \Delta^2_{\sigma_2 \to \sigma_1} \times 
 \big(\PGL_{2}(\CC)/ \Stab(\xi_3) \big)\times \widetilde{\Stab}(\xi_1).
 $$
 By Proposition \ref{prop:lemma-connected-group}, $e_{S_2}(\SL_2(\CC))=(q^3-q)T$.
 Using (\ref{eqn:sigma2}) and (\ref{eqn:PGL2D}), we get 
 \begin{align*}
e_{S_2}( \tilde R_{\xi_3 \to \xi_1}) &= ((n-1) T + (n-1) N) \otimes (q^2 T + qN) \otimes (q^3-q)T\\
 &=  (n-1)(q^3-q)(q^2+q)  T + (n-1)(q^3-q)(q^2+q)N,
 \end{align*}
 which produces $e( R_{\xi_3 \to \xi_1}) =(n-1)(q^3-q)(q^2+q)$.
\end{itemize}

Adding up all the contributions we finally get
$$
 e \big(R(H_n,\SL_2(\CC))\big) = \left( (n-1)q^2+ n q -n+5)\right)(q^3-q).
$$

\section{Rank $2$ character variety of the twisted Hopf link}\label{sec:rank2-character}

In this section, we compute the $E$-polynomial $e(\frM(H_n,G))$, for $G=\SL_2(\CC)$. First, we deal with reducible representations $(A,B)$. These
are S-equivalent to representations of the form 
 $$
  \left(\left(\begin{array}{cc} \lambda & 0 \\ 0 & \lambda^{-1} \end{array} \right) , 
\left(\begin{array}{cc} \mu & 0 \\ 0 & \mu^{-1} \end{array} \right) \right),
$$
where $(\lambda,\mu) \in (\CC^*)^2$. These are defined modulo the action 
$(\lambda,\mu)  \sim (\lambda^{-1},\mu^{-1})$. The equivariant $E$-polynomial of $\CC^*$ is
$e_{S_2}(\CC^*)=qT-N$. Hence $e_{S_2}((\CC^*)^2)=(qT-N)^2=(q^2+1)T -2q N$, and
 $$
  e(\frM^{\red}(H_n,G)) = q^2+1.
  $$
  
Now we move to the irreducible representations, which form a space $R^{\irr}(H_n,G)\subset R(H_n,G)$. As
the action is free, $\frM^{\irr}(H_n,G)=R^{\irr}(H_n,G)/\PGL_2(\CC)$.

\begin{lemma} \label{lem:reducible} Let $r\geq 2$. 
\begin{enumerate}
\item For a type $\xi$ and $G=\SL_r(\CC)$, the space $R^{\irr}(H_n,G)_{\xi \to \xi}=\emptyset$.
\item  For types $\xi'\to \xi$, if they are not diagonalizable, then $R^{\irr}(H_n,G)_{\xi' \to \xi}=\emptyset$.
\item For a type $\xi$ not corresponding to a multiple of the identity,  $R^{\irr}(H_n,G)_{\xi' \to \xi}=\emptyset$.
\end{enumerate}
  \end{lemma}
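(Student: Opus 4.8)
The plan is to exploit the description of the representation variety from Proposition~\ref{prop:lem:RG} together with the structural fact (used in the introduction) that for an irreducible pair $(A,B)$ the matrix $A^n$ must be central, i.e.\ a multiple of the identity. The unifying principle for all three parts is: if $A^n$ is \emph{not} scalar, then $A$ and $B$ both preserve the eigenspace decomposition (or at least a proper generalized eigenspace) of $A^n$, since $B$ commutes with $A^n$ and $A$ commutes with itself; this gives a common invariant proper subspace, contradicting irreducibility.

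For part (1), I would take a stratum $R(H_n,\SL_r(\CC))_{\xi\to\xi}$. By Lemma~\ref{lem:A}, for a pair $(A,B)$ in this stratum we may conjugate so that $A=A_0\in\cA^g_\xi$ with $A_0^n\in\cA_\xi$, so $A_0$ and $A_0^n$ have the \emph{same} eigenvalue partition $\sigma$. If $\sigma$ is non-trivial, the distinct eigenvalues of $A_0^n$ already split $\CC^r$ into a direct sum of proper $A_0^n$-invariant subspaces, each of which is $A_0$-invariant (because $A_0$ commutes with $A_0^n$ — indeed $A_0$ preserves its own generalized eigenspaces) and $B$-invariant (because $B$ commutes with $A_0^n$); this contradicts irreducibility. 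If $\sigma$ is trivial but $\xi$ is not the type of a scalar matrix — i.e.\ $A_0$ has a single eigenvalue but is a nontrivial Jordan matrix — then $A_0$ (hence $A$) has a proper invariant subspace, namely the span of the first basis vectors of each Jordan block, and since $A_0^n$ is then also a single-eigenvalue non-scalar matrix commuting with $B$, the same kernel-of-(nilpotent part) argument forces $B$ to preserve that subspace; again not irreducible. Finally if $\xi$ is the scalar type, then $A=\pm\Id$ (rank $2$) or $A=\zeta\Id$ more generally, and then $[A^n,B]=\Id$ automatically but any such pair $(A,B)$ has every $B$-invariant subspace invariant, so irreducibility of $(A,B)$ reduces to irreducibility of $B$ alone; however $\xi\to\xi$ with $\xi$ scalar does not force $A^n$ scalar in a way that uses $B$ — I need to check this boundary case carefully and I expect the cleanest route is just to observe $A=\zeta\Id$ makes $A^n$ scalar, and the statement of part (1) as written must be reconciled: either this sub-case is empty for a different reason or it is being absorbed elsewhere. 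This reconciliation is the main obstacle in part (1).

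Parts (2) and (3) are then corollaries of the same circle of ideas. For (2): if $\xi'\to\xi$ and $\xi'$ is not diagonalizable, then $A_0\in\cA^g_{\xi'}$ has a genuine Jordan block, so $A_0$ has a proper invariant subspace $W$ (the span of bottom vectors of each block, say), and $A_0^n$ shares the property of having each generalized eigenspace $A_0$-stable; since $B$ commutes with $A_0^n$, one shows $B$ stabilizes a proper subspace compatible with $W$ — the detail to nail down is that $B$, lying in $\widetilde{\Stab}(\xi)$, is block-upper-triangular with respect to the generalized eigenspaces of $A^n$ and hence preserves the socle/image filtration of the nilpotent parts, so $A$ and $B$ have a genuine common invariant proper subspace. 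For (3): if $\xi$ is not the scalar type, then $A^n\in\cA_\xi$ is not a multiple of the identity, so either it has $\geq 2$ distinct eigenvalues (giving a proper $A^n$-eigenspace decomposition, each piece $A$- and $B$-invariant as above) or it is unipotent-times-scalar but non-scalar (giving a proper $A^n$-invariant subspace, namely $\ker$ of its nilpotent part, which is $A$-invariant since $A$ commutes with $A^n$ and $B$-invariant since $B$ does too) — either way $(A,B)$ is reducible, so $R^{\irr}(H_n,G)_{\xi'\to\xi}=\emptyset$. The main obstacle throughout is the bookkeeping in the single-eigenvalue Jordan cases: verifying that "$B$ commutes with the non-scalar matrix $A^n$" genuinely produces a subspace invariant under \emph{both} $A$ and $B$ simultaneously, rather than two unrelated invariant subspaces; this is handled by working in a basis adapted to $\xi$ and noting that $\widetilde{\Stab}(\xi)$ respects the canonical filtration attached to the Jordan type, which $A$ also respects.
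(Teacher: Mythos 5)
Your unifying principle --- a non-scalar $A^n$ produces a proper subspace invariant under both $A$ (which commutes with $A^n$ trivially) and $B$ (which commutes with $A^n$ by the defining relation) --- is exactly the argument the paper uses for parts (2) and (3) and for the non-scalar case of part (1). The paper packages it as the single subspace $W=E_{\lambda^n}(A^n)=\bigoplus_{\varepsilon\in\mu_n}E_{\lambda\varepsilon}(A)$, which is proper whenever some Jordan $\lambda$-block of $A$ is non-diagonal or $A^n$ has at least two distinct eigenvalues; your case analysis in (2) and (3) is sound, even though your identification of the common invariant subspace (``socle/image filtration'', ``a proper subspace compatible with $W$'') is left informal where the paper writes down the one identity that does the work.

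The genuine gap is the sub-case of part (1) that you yourself flag as ``the main obstacle'': $\xi\to\xi$ with $\xi$ the scalar type, i.e.\ $A=\zeta\Id$. You correctly note that irreducibility of $(A,B)$ then reduces to irreducibility of $B$ alone, but you stop there and speculate that the sub-case is ``empty for a different reason or absorbed elsewhere''. It is not absorbed elsewhere; it is empty for the elementary reason that a single matrix $B$ acting on $\CC^r$ with $r\geq 2$ always has an eigenvector over $\CC$, hence a proper nonzero invariant subspace, which the scalar matrix $A$ automatically preserves. This is precisely the one sentence the paper devotes to the case (``any vector subspace $W\subsetneq \CC^r$ fixed by $B$ is also fixed by $A$''), and without it part (1) is not proved. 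A smaller point in the same vein: in the single-eigenvalue non-diagonalizable cases, the subspace you want is $E_{\lambda^n}(A^n)$, which coincides with $E_\lambda(A)$ because $\lambda\neq 0$ (the derivative term $n\lambda^{n-1}$ in the expansion of $(\lambda\Id+N)^n$ is invertible); making that explicit would close the step you describe as ``the detail to nail down''.
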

 
 \begin{proof}
  (1) Let $(A,B) \in R(H_n,G)_{\xi \to \xi}$, then $A$ and $A^n$ are of the same type. Therefore, the eigenvectors
  of $A$ and $A^n$ are the same. If $A$ and $A^n$ are a multiple of the identity, then any vector subspace
  $W\subsetneq \CC^r$ fixed by $B$ is also fixed by $A$.  Otherwise, take an eigenvalue $\lambda$ of $A$
  such that the eigenspace $W=E_\lambda(A)=E_{\lambda^n}(A^n) \subsetneq \CC^r$ is a proper subspace.
  Now $[A^n,B]=\Id$ implies that $B(W) =W$, and hence $(A,B)$ is a reducible representation.
  
  (2) Let $(A,B) \in R(H_n,G)_{\xi' \to \xi}$, and suppose that $\lambda$ is an eigenvalue of $A$ such that the
  Jordan $\lambda$-block is not diagonal. Then $E_\lambda(A)$ is a proper subspace of the $\lambda$-block,
  and hence  
   \begin{equation} \label{eqn:W}
   W=E_{\lambda^n}(A^n) =\bigoplus_{\varepsilon\in \mu_n} E_{\lambda \varepsilon}(A) \subsetneq \CC^r\, .
   \end{equation}
  Again $[A^n,B]=\Id$ implies that $B(W) =W$, and hence $(A,B)$ is a reducible representation. 

  (3) The last item is similar, taking $\lambda$ an eigenvalue of $A$, then $W$ defined in (\ref{eqn:W}) is 
 also a proper subspace of $\CC^r$.
 \end{proof}

This result implies that for $G=\SL_2(\CC)$ we only have to look at $R^{\irr}_{\xi_3 \to \xi_1}$. 
Take an irreducible $(A,B) \in R^{\irr}_{\xi_3 \to \xi_1}$. 
The matrix $A$ can be put in diagonal form with eigenvalues $(\lambda,\lambda\varepsilon)$,
$\lambda^n = \pm 1$, $\lambda^2\varepsilon=1$, $\lambda\neq \pm 1$. This is the same as to say $\lambda \in \mu_{2n}-\mu_2$.
The action of interchanging eigenvalues is an $S_2$-action free on $\mu_{2n}-\mu_2$. Therefore there 
are $n-1$ possibilities. Taking a suitable basis, then 
 \begin{equation}\label{eqn:ABirred}
(A,B)=  \left(\left(\begin{array}{cc} \lambda & 0 \\ 0 & \lambda\varepsilon \end{array} \right) , 
\left(\begin{array}{cc} a & b \\ c & d \end{array} \right) \right),
 \end{equation}

As the pair is irreducible, then $(1,0)$ and $(0,1)$ are not eigenvectors of $B$, or equivalently, $b,c\neq 0$.
This is the same as $bc\neq 0$. The action of $\cD\subset \PGL_2(\CC)$ moves $(b,c)\mapsto (\varpi^{2} b, \varpi^{-2}c)$,
so we can set $b=1$, and hence $c=1-ad$. Summing up, 
 $$
 \frM^{\irr}(H_n,G)) \cong \big (\mu_{2n}-\mu_2\big)/S_2 \times \{(a,d) \in \CC^2 \, | \, ad \neq 1\}.
 $$
 The set $ad=1$ is a hyperbola, isomorphic to $\CC^*$. Therefore
 \begin{equation}\label{eqn:Mirr2}
 e(\frM^{\irr}(H_n,G))=(n-1)(q^2-q+1).
 \end{equation}

Finally
 $$
 e(\frM(H_n,\SL_2(\CC)))=q^2+1+(n-1)(q^2-q+1).
 $$

\section{Rank $3$ representation variety of the twisted Hopf link. Combinatorial setting}\label{sec:rank3-combinatorial}

In this section and the next one, we shall compute the $E$-polynomial 
of the $\SL_{3}(\CC)$-representation variety of the twisted Hopf link $H_n$. 
For that purpose, we will analyze the combinatorial and geometric settings as described in Section \ref{sec:previous}.
We start in this section by analyzing the combinatorial setting.

Let $G=\SL_3(\CC)$. 
In this rank, up to equivalence, we have $3$ possible partitions, namely
 \begin{equation}\label{eqn:sigmai}
 \sigma_1 = \{\{1,2,3\}\}, \qquad \sigma_{2} = \{\{1,2\},\{3\}\}, \qquad \sigma_3 = \{\{1\}, \{2\}, \{3\}\}. 
 \end{equation}
They correspond to the configurations
\begin{align*}
	\Delta^3_{\sigma_1} &= \{\lambda_1 \in (\CC^*)^3\,|\, \lambda_1^3 = 1\} = \mu_3, \\
	\Delta^3_{\sigma_2} &= \{(\lambda_1, \lambda_2) \in (\CC^*)^2\,|\, \lambda_1^2\lambda_2 = 1, \lambda_1\neq \lambda_2\} 
	 = \CC^* - \Delta^3_{\sigma_1} = \CC^* - \mu_3, \\
	\Delta^3_{\sigma_3} &= \{(\lambda_1, \lambda_2, \lambda_3) \in \CC^*|\, 
	\lambda_1\lambda_2\lambda_3 = 1,  \lambda_1\neq \lambda_2\neq \lambda_3\neq \lambda_1\}.
\end{align*}

Only the stratum $\Delta^3_{\sigma_3}$ is equipped with a non-trivial action. In this case, $S_3$ acts on $\Delta^3_{\sigma_3}$ 
by permutation of $\lambda_1, \lambda_2$ and $\lambda_3$. First,
$\Delta^3_{\sigma_3}\cong (\CC^*)^2- \big( \{(\lambda_1,\lambda_1)\}\cup \{(\lambda_1,\lambda_1^{-2})\}\cup 
\{(\lambda_1^{-2},\lambda_1)\}\big)$. The curves intersect in $3$ points $\{(\varpi,\varpi)| \varpi\in \mu_3\}$, therefore
 $$
 e(\Delta^3_{\sigma_3})=(q-1)^2-3(q-1)+3\cdot 3-3= q^2-5q+10.
 $$

With respect to the refinements, for those over $\sigma_1$ we have
\begin{align*}
	e\left(\Delta^3_{\sigma_1 \to \sigma_1}\right) &= e\left(\Delta^3_{\sigma_1}\right) = 3. \\
	e\left(\Delta^3_{\sigma_2 \to \sigma_1}\right) &= e\left(\{(\lambda_1, \varepsilon) \in \CC^* \times \mu_n \,|\, 
	\lambda_1^3\varepsilon = 1, \varepsilon \neq 1\}\right)  \\
	&  = e\left(\{(\lambda_1, \varepsilon) \in \CC^* \times \mu_n \,|\, \lambda_1^3\varepsilon = 1\}\right) -  
	e\left(\{\lambda_1 \in \CC^* \,|\, \lambda_1^3 = 1\}\right)  \\
	&= e(\mu_{3n}) - e(\mu_3) = 3n-3. \\
	e\left(\Delta^3_{\sigma_3 \to \sigma_1}\right) &=  
	e\left(\{(\lambda_1, \varepsilon_1, \varepsilon_2) \in \CC^* \times (\mu_n)^2 \,|\, 
	\lambda_1^3\varepsilon_1\varepsilon_2 = 1, \varepsilon_1 \neq 1, \varepsilon_2 \neq 1, \varepsilon_1 \neq \varepsilon_2\}\right) \\
	&= e\left(\{(\lambda_1^3\varepsilon_1\varepsilon_2 = 1\}\right) 
	- e\left(\{\lambda_1^3\varepsilon_2 = 1, \varepsilon_1=1\}\right) - e\left(\{\lambda_1^3\varepsilon_1 = 1, \varepsilon_2=1\}\right) \\
	& \hspace{0.3cm} - e\left(\{\lambda_1^3\varepsilon_1^2 = 1, \varepsilon_1=\varepsilon_2\}\right) 
	+ 3\,e\left(\{\lambda_1^3 = 1, \varepsilon_1=\varepsilon_2=1\}\right) \\
	& \hspace{0.3cm} - e\left(\{\lambda_1^3 = 1, \varepsilon_1=\varepsilon_2=1\}\right) \\
	& = e(\mu_{3n} \times \mu_n) - e(\mu_{3n}) - e(\mu_{3n}) - e(\mu_{3n})+ 3\,e(\mu_3) - e(\mu_3) = 3n^2 - 9n +6.
	\end{align*}

Moreover, for those over $\sigma_2$ we have
\begin{align*}
 e\left(\Delta^3_{\sigma_2 \to \sigma_2}\right) &= e\left(\{(\lambda_1, \lambda_2) \in (\CC^*)^2 \,|\, 
	\lambda_1^2\lambda_2 = 1, \lambda_1^n \neq \lambda_2^n\}\right) \\
	& = e\left(\Delta^3_{\sigma_2}\right) - e\left(\Delta^3_{\sigma_2 \to \sigma_1}\right) = (q-4) - (3n-3) = q-3n-1. \\
 e\left(\Delta^3_{\sigma_3 \to \sigma_2}\right) &=
        e\big( \{(\lambda_1, \lambda_2, \varepsilon) \in (\CC^*)^2 \times \mu_n^* \,|\, 
	\lambda_1^2\lambda_2\varepsilon = 1, \lambda_2 \neq \lambda_1\epsilon, \forall \epsilon\in \mu_n\} \big) \\
	& = e(\{\lambda_1^2\lambda_2\varepsilon=1\}) - e(\{\epsilon \lambda_1^3\varepsilon=1,\epsilon\in \mu_n,
	\varepsilon\in \mu_n^*\})  \\
	&= e(\CC^* \times \mu_n^*) - e(\mu_{3n}\times \mu_n^*)  = (n-1)(q-3n-1).
	\end{align*}

Finally, to compute $\Delta^3_{\sigma_3\to \sigma_3}$, let us consider the three possible partitions equivalent to $\sigma_2$, namely
$$
 \sigma_2 = \{\{1,2\},\{3\}\}, \qquad \sigma_2' = \{\{1\},\{2,3\}\}, \qquad \sigma_2'' = \{\{1,3\},\{2\}\}.
$$
Notice that we have a decomposition
\begin{align}\label{eq:decom-sigma3-sigma3}
	\Delta^3_{\sigma_3} = \Delta^3_{\sigma_3\to \sigma_3} \sqcup \Delta^3_{\sigma_3\to \sigma_2}  \sqcup \Delta^3_{\sigma_3\to \sigma_2'}  \sqcup \Delta^3_{\sigma_3\to \sigma_2''}  \sqcup \Delta^3_{\sigma_3\to \sigma_1}.
\end{align}
Since $\Delta^3_{\sigma_3\to \sigma_2}  \cong \Delta^3_{\sigma_3\to \sigma_2'}  \cong \Delta^3_{\sigma_3\to \sigma_2''}$, we get
\begin{align*}
 e\left(\Delta^3_{\sigma_3 \to \sigma_3}\right) &= e\left(\Delta^3_{\sigma_3}\right) - 3e\left(\Delta^3_{\sigma_3 \to \sigma_2}\right) 
 -  e\left(\Delta^3_{\sigma_3\to \sigma_1}\right) \\
 & = q^2 - (3n + 2)q +6n^2 + 3n +1.
 \end{align*}

\subsection{Equivariant $E$-polynomials}

First, let us analyze the action of $S_3$ on $\Delta^3_{\sigma_3}$.  For the quotient $\Delta^3_{\sigma_3}/S_3$, we note that $ (\CC^*)^2/S_3$ is parametrized by $(s=\lambda_1+\lambda_2+\lambda_3, 
p=\lambda_1\lambda_2+\lambda_1\lambda_3+\lambda_2\lambda_3)\in \CC^2$. The image of
the three lines that we have to remove in $\Delta^2_{\sigma_3}$ is just one line $\CC^*$ in $\Delta^3_{\sigma_3}/S_3$, so $e(\Delta^3_{\sigma_3}/S_3)=q^2-q+1$. 
Finally, $\Delta^3_{\sigma_3}/\la \tau\ra$ is parametrized by $(u=\lambda_1+\lambda_2, 
v=\lambda_1\lambda_2) \in \CC\x\CC^*$, removing the lines $\{(\lambda_1, \lambda_1)\} \cup \{(\lambda_1,\lambda_1^{-2})\}$,
which intersect in $3$ points. Thus $e(\Delta^3_{\sigma_3}/\la \tau\ra)=q^2-q-2(q-1)+3=q^2-3q+5$. This produces:
 \begin{equation}  \label{eqn:S3otro}
  e_{S^3}(\Delta^3_{\sigma_3}) = (q^2-q+1) T+ S + (-2q+4)D. 
  \end{equation}

The configuration spaces with actions are  $\Delta^3_{\sigma_3 \to \sigma_1}$,  
$\Delta^3_{\sigma_3 \to \sigma_2}$ and  $\Delta^3_{\sigma_3 \to \sigma_3}$, which can be analyzed as follows.

\begin{itemize}
\item For the first one, we observe that the action of $S_3$ and $\tau=(1,2)$ 
on $\Delta^3_{\sigma_3 \to \sigma_1}$ are free (they interchange different eigenvalues). 
Since $\Delta^3_{\sigma_3 \to \sigma_1}$ is just a finite collection of points, we directly get that 
$e\left(\Delta^3_{\sigma_3 \to \sigma_1}/S_3\right) = (3n^2 - 9n +6)/6 = \frac12 (n^2 - 3n + 2)$ 
and $e\left(\Delta^3_{\sigma_3 \to \sigma_1}/\la\tau\ra\right) = \frac12 (3n^2 - 9n +6)$. Therefore, using (\ref{eqn:S3}) we have
 \begin{align}\label{eq:sigma3-sigma3-S3}
   e_{S_3}\left(\Delta^3_{\sigma_3 \to \sigma_1}\right) &= \frac12 \left(n^2 - 3n + 2\right) T + \frac12 \left( n^2 - 3n + 2\right)S + (n^2 - 3n + 2)D.
 \end{align}
	
\item For the second space, observe that the action of $S_3$ on $\Delta^3_{\sigma_3}$ does not restrict to 
$\Delta^3_{\sigma_3 \to \sigma_2}$. Instead, the acting subgroup is $S_{\sigma_3 \to \sigma_2} = \langle\tau \rangle < S_3$ 
with action given by $\tau \cdot (\lambda_1, \lambda_1 \varepsilon, \lambda_2) = (\lambda_1 \varepsilon, \lambda_1, \lambda_2)$,
in terms of the eigenvalues, or equivalently, $\tau \cdot (\lambda_1, \lambda_2, \varepsilon) = (\lambda_1\varepsilon, \lambda_2, \varepsilon^{-1})$
on $(\CC^*)^2\x\mu_n^*$.

Let us focus first on the natural extension of this action to $\{\lambda_1^2\lambda_2\varepsilon=1\} \cong \CC^* \times \mu_{n}^*$, 
given as $\tau \cdot (\lambda_1, \varepsilon) = (\lambda_1\varepsilon, \varepsilon^{-1})$. These are $n-1$ different 
punctured lines and the action depends on the value of $\varepsilon$. 
If $\varepsilon = -1$ (which can only happen if $n$ is even) the action is 
$\lambda_1 \mapsto -\lambda_1$ whose quotient is $\CC^*$; for $\varepsilon \neq \pm 1$, the action 
interchanges the pair of lines $\CC^* \times \{\varepsilon\}$ to $\CC^* \times \{\varepsilon^{-1}\}$ so the 
quotient is just one of them. In this way,
$$
 e((\CC^* \times \mu_n^*)/\la\tau\ra) =  \floor{\frac{n}{2}} (q-1),
 $$ 
where $\floor{x}$ is the floor function 
(the greatest integer less than or equal to $x$). 
Now we have to remove, 
$\{\epsilon \lambda_1^3\varepsilon=1, \epsilon\in \mu_n,\varepsilon\in \mu_n^*\} \cong \mu_{3n}\times \mu_n^*$, 
and the action is 
$\tau\cdot (\lambda_1, \varepsilon)=(\lambda_1\varepsilon, \varepsilon^{-1})$. If $\varepsilon \neq -1$, the action is clearly
free, and if $\varepsilon=-1$ then the action is free as well. Hence this accounts form $3n(n-1)/2$ points.
%
Therefore, putting all together we get
 \begin{align*}
 e\left(\Delta^3_{\sigma_3 \to \sigma_2} / \la\tau\ra \right) 
 & = \floor{\frac{n}{2}} (q-1) - \frac32n(n-1). 
\end{align*}

Thus, we finally find that
\begin{equation}\label{eq:sigma3-sigma2-s2}
\begin{aligned}
  e_{S_2}\left(\Delta^3_{\sigma_3 \to \sigma_2}\right) =& \left(\floor{\frac{n}{2}}(q-1) - \frac{3n(n-1)}2 \right) T \\
 &  + \left(\floor{\frac{n-1}{2}}(q-1) - \frac{3n(n-1)}2 \right)N.
\end{aligned}
 \end{equation}
 
\item To study the remaining configuration, $\Delta^3_{\sigma_3\to \sigma_3}$, observe that
regarding decomposition (\ref{eq:decom-sigma3-sigma3}) the action of $S_3$ on $\Delta^3_{\sigma_3}$ 
leaves invariant $\Delta^3_{\sigma_3\to \sigma_3}$, $\Delta^3_{\sigma_3\to \sigma_1}$, 
and $\Delta^3_{\sigma_3\to \sigma_2}  \sqcup \Delta^3_{\sigma_3\to \sigma_2'}  \sqcup \Delta^3_{\sigma_3\to \sigma_2''} $. 
For this later action, we have
$$
 \left(\Delta^3_{\sigma_3\to \sigma_2}  \sqcup \Delta^3_{\sigma_3\to \sigma_2'}  \sqcup \Delta^3_{\sigma_3\to \sigma_2''} \right)/S_3 = \Delta^3_{\sigma_3\to \sigma_2} / \langle \tau \rangle.
$$
Hence, using the previous computations we get
\begin{align*}
	e\left(\Delta^3_{\sigma_3\to \sigma_3}/S_3\right) &= e\left(\Delta^3_{\sigma_3}/S_3\right) - e\left(\Delta^3_{\sigma_3\to \sigma_2} / \langle \tau \rangle\right) - e\left(\Delta^3_{\sigma_3\to \sigma_1}/S_3\right) \\
	& =  q^{2} - q  - \floor{\frac{n}{2}} (q-1)  + n^2. 
\end{align*}

Similarly, for the action of $\tau$ we have that $\Delta^3_{\sigma_3\to \sigma_3}$, $\Delta^3_{\sigma_3\to \sigma_2} $ and $\Delta^3_{\sigma_3\to \sigma_1}$ are invariant, whereas it permutes $\Delta^3_{\sigma_3\to \sigma_2'}$ and $\Delta^3_{\sigma_3\to \sigma_2''}$. Hence,
$$
	\left(\Delta^3_{\sigma_3\to \sigma_2'}  \sqcup \Delta^3_{\sigma_3\to \sigma_2''} \right)/\langle \tau \rangle = \Delta^3_{\sigma_3\to \sigma_2'} \cong \Delta^3_{\sigma_3\to \sigma_2},
$$
and therefore
\begin{align*}
	e\left(\Delta^3_{\sigma_3\to \sigma_3}/\langle \tau \rangle\right) &= e\left(\Delta^3_{\sigma_3}/\langle \tau \rangle\right) - e\left(\Delta^3_{\sigma_3\to \sigma_2} / \langle \tau \rangle\right) - e\left(\Delta^3_{\sigma_3\to \sigma_2}\right) - e\left(\Delta^3_{\sigma_3\to \sigma_1}/\langle \tau \rangle\right) \\
	& =   q^{2} -\floor{\frac{3n+4}{2}}(q-1) +  3n^2-1.
\end{align*}

In this way, using (\ref{eqn:S3}), we get
\begin{align}\begin{split}\label{eq:sigma3-sigma3-s3}
	e_{S_3}\left(\Delta^3_{\sigma_3\to \sigma_3}\right) =& \left(q^{2} - q  - \floor{\frac{n}{2}} (q-1)  + n^2 \right) T 
	- \left(\floor{\frac{n-1}{2}} (q-1)- n^2  \right) S \\ 
	 &- \left((n+1) (q-1) - 2n^2 \right)D.
\end{split}
\end{align}

\end{itemize}

\section{Rank $3$ representation variety of the twisted Hopf link. Geometric setting}\label{sec:rank3-geometric}

From the three partitions $\sigma_1,\sigma_2,\sigma_3$ in 
(\ref{eqn:sigmai}), we can create six types (up to equivalence), namely
 \begin{align*}
 &\xi_1 = (\sigma_1, \{1,2,3\}), && \xi_2 = (\sigma_1, \{(1,2),3\}), && \xi_3 = (\sigma_1, \{(1,2,3)\}), \\
 &\xi_4=(\sigma_2, \{\{1,2\},3\}), && \xi_5=(\sigma_2,\{\{(1,2)\},3\}), && \xi_6 = (\sigma_3,\{\{1\},\{2\},\{3\} \}).
 \end{align*}
The type $\xi_j$ is given by the matrix $A_j$, $j=1,2,3,4,5,6$, where 
 \begin{itemize}
 \item $A_1=\small\left( \begin{array}{ccc} \lambda_1 & 0 & 0 \\ 0& \lambda_1 & 0 \\ 0 & 0& \lambda_1  \end{array} \right)$, with $\lambda_1\in \mu_3$.
 \item $A_2=\small\left( \begin{array}{ccc} \lambda_1 & 0 & 0 \\ 1& \lambda_1 & 0 \\ 0 & 0& \lambda_1  \end{array} \right)$, with $\lambda_1\in \mu_3$.
 \item $A_3=\small\left( \begin{array}{ccc} \lambda_1 & 0 & 0 \\ 1& \lambda_1 & 0 \\ 0 & 1& \lambda_1  \end{array} \right)$, with $\lambda_1\in \mu_3$.
 \item $A_4=\small\left( \begin{array}{ccc} \lambda_1 & 0 & 0 \\ 0& \lambda_1 & 0 \\ 0 & 0& \lambda_2  \end{array} \right)$, with $\lambda_1\neq \lambda_2$.
Here $\lambda_2=\lambda_1^{-2}$, so $\lambda_1\in \CC^*-\mu_3$.
 \item $A_5=\small\left( \begin{array}{ccc} \lambda_1 & 0 & 0 \\ 1& \lambda_1 & 0 \\ 0 & 0& \lambda_2  \end{array} \right)$, with 
 $\lambda_2=\lambda_1^{-2}$, and $\lambda_1\in \CC^*-\mu_3$.
 \item $A_6=\small\left( \begin{array}{ccc} \lambda_1 & 0 & 0 \\ 0& \lambda_2 & 0 \\ 0 & 0& \lambda_3  \end{array} \right)$, with $\lambda_i\neq \lambda_j$
for $i\neq j$, $\lambda_1\lambda_2\lambda_3=1$.
\end{itemize}
Only in the case $\xi_6$ we have an action of the group $S_3$ given by the permutation of the eigenvalues $\lambda_i$.

Denote $R_{\xi_i\to \xi_j}=R(H_n,\SL_3(\CC))_{\xi_i\to \xi_j}$. 
Taking into account the possible refinement relations, we have
$$
 R(H_n,\SL_3(\CC)) = R_{\xi_4 \to \xi_1} \sqcup R_{\xi_6 \to \xi_4}
 \sqcup R_{\xi_6 \to \xi_1}\sqcup R_{\xi_5 \to \xi_2} \sqcup \bigsqcup_{1\leq i\leq 6} R_{\xi_i \to \xi_i}.
$$
Using Proposition \ref{prop:lem:RG}, we have 
 \begin{equation}\label{eqn:ij}
 R_{\xi_i \to \xi_j} \cong  
 \left.\left(\cA_{\xi_i \to \xi_j} \times \big(\PGL_{3}(\CC)/ \Stab(\xi_i)\big) \times \widetilde{\Stab}(\xi_j) \right) \right/ S_{\xi_i \to \xi_j} .
 \end{equation}
 
\subsection{The stabilizers} 
We start by studying $\PGL_{3}(\CC)/\Stab(\xi_i)$ and $\widetilde{\Stab}(\xi_j)$. Recall that $e(\PGL_3(\CC))=e(\SL_3(\CC))=
(q^3-1)(q^3-q)q^2$.

\begin{itemize}
 \item $\widetilde{\Stab}(\xi_1)=\SL_3(\CC)$.\vspace{2mm}
 
 \item $\widetilde{\Stab}(\xi_2)=\left\{{\small \left( \begin{array}{ccc} \beta_1 & 0 & 0 \\ a& \beta_1 & b \\ c & 0& \beta_1^{-2}  \end{array} \right)}\Big|\beta_1\in \CC^*\right\}
 \cong \CC^*\x \CC^3$. 
 
 \noindent Then  $e(\widetilde{\Stab}(\xi_2))=(q-1)q^3$, and $e(\PGL_3(\CC)/\Stab(\xi_2))= (q^3-1)(q+1)$. \vspace{2mm}

 \item $\widetilde{\Stab}(\xi_3)=\left\{{\small \left( \begin{array}{ccc} \beta_1 & 0 & 0 \\ a& \beta_1 & 0 \\ b & a& \beta_1  \end{array} \right)}\Big|\beta_1\in \mu_3\right\}
 \cong \mu_3 \x\CC^2$. 
 
 \noindent Thus  $e(\widetilde{\Stab}(\xi_3))=3q^2$, $\Stab(\xi_3)=q^2$, and $e(\PGL_3(\CC)/\Stab(\xi_3))=(q^3-1)(q^2-1)q$.\vspace{2mm}

 \item $\widetilde{\Stab}(\xi_4)=\left\{{\small \left( \begin{array}{c|c} B' & 0 \\ \hline 0& (\det B')^{-1}  \end{array} \right)}\Big| B'\in \GL_2(\CC)\right\}$.
 
 \noindent Thus  $e(\widetilde{\Stab}(\xi_4))=(q^2-1)(q^2-q)$, and $e(\PGL_3(\CC)/\Stab(\xi_4))= (q^2+q+1)q^2$.\vspace{2mm}
 
 \item $\widetilde{\Stab}(\xi_5)=\left\{{\small \left( \begin{array}{ccc} \beta_1 & 0 & 0 \\ a& \beta_1 & 0 \\ 0 & 0& \beta_1^{-2} \end{array} \right)}\Big
  |\beta_1\in \CC^*\right\} \cong \CC^*\x \CC$. 
  
  \noindent Thus  $e(\widetilde{\Stab}(\xi_5))=q(q-1)$, and $e(\PGL_3(\CC)/\Stab(\xi_5))=(q^3-1)(q^3+q^2)$. \vspace{2mm}

 \item $\widetilde{\Stab}(\xi_6)=\left\{{\small \left( \begin{array}{ccc} \beta_1 & 0 & 0 \\ 0& \beta_2 & 0 \\ 0 & 0& \beta_3  \end{array} \right)} \Big| \beta_1\beta_2\beta_3=1\right\}
 =\cD \cong \Delta^3_{\sigma_3}= (\CC^*)^2$.
 
\noindent  Then  $e(\widetilde{\Stab}(\xi_6))=(q-1)^2$, and $e(\PGL_3(\CC)/\Stab(\xi_6))=(q^2+q+1)(q+1)q^3$.
\end{itemize}

Only in the case of $\xi_6$ there is an action of $S_3$. Let us give the equivariant $E$-polynomials.
The quotient $\cD/S_3$ is parametrized by 
$(s=\beta_1+\beta_2+\beta_3, p=\beta_1\beta_2+\beta_1\beta_3+\beta_2\beta_3)\in\CC^2$, hence
$e(\cD/S_3)=q^2$. Also $\Delta^3_{\sigma_3}/\la \tau\ra= (\CC^*/\ZZ_2)\x \CC^* \cong \CC\x \CC^*$,  parametrized
by $u=\beta_1+\beta_2$, $v=\beta_1\beta_2$, $\beta_3=v^{-1}$, 
and hence $e(\cD/\la \tau\ra)=q^2-q$. All together (\ref{eqn:S3}) yields
  \begin{equation}\label{eqn:D}
  e_{S_3}(\cD)= q^2 T+ S -q D.
  \end{equation}
  
The quotient $\PGL_3(\CC)/\cD$ is parametrized by the column vectors of the matrix in $\GL_3(\CC)$ up to scalar.
This means that 
 $$
\PGL_3(\CC)/\cD=\{([v_1],[v_2],[v_3]) \in (\PP^2)^3 \text{ linearly independent}\}= (\PP^2)^3- \cL,
$$
where $\cL$ is the subspace of three linearly dependent points of $\PP^2$. This is $\cL=\cL^0 \sqcup \cL^1$, according to 
whether they are coincident, or they span a line. In the first case $\cL^0\cong \PP^2$, and
in the second, $\cL^1 \cong \Gr(\PP^1,\PP^2) \x ((\PP^1)^3-\cL^0)$, given by choosing a line and three not-all-equal points of it.
The Grassmannian of lines is the dual projective plane, so 
 \begin{align*}
 e( \PGL_3(\CC)/\cD) &=   (q^2+q+1)^3- \Big((q^2+q+1) + (q^2+q+1)((q+1)^3-(q+1)) \Big) \\
 &= q^6+2q^5+2q^4+q^3.
\end{align*}
This agrees with $e(\PGL_3(\CC)/\cD)=(q^3-1)(q^3-q)q^2/(q-1)^2$.

For the quotient, $(\PGL_3(\CC)/\cD)/S_3 = \Sym^3(\PP^2)- \cL$,
where now $\cL=\PP^2 \sqcup (\Gr(\PP^1, \PP^2) \x (\Sym^3(\PP^1)-\PP^1))$. Then
\begin{align*}
 e((\PGL_3(\CC)/\cD)/S_3) =&\,  (q^6+q^5+2q^4+2q^3+2q^2+q+1) \\ &- \Big((q^2+q+1) + (q^2+q+1)((q^3+q^2+q+1)-(q+1)) \Big) \\
 =&\,  q^6.
\end{align*}
Finally, $(\PGL_3(\CC)/\cD)/\la \tau \ra = \Sym^2(\PP^2) \x \PP^2 - \cL$,
where  $\cL=\PP^2 \sqcup ( \Gr(\PP^1, \PP^2) \x (\Sym^2(\PP^1)\x \PP^1-\PP^1))$. Therefore
\begin{align*}
 e((\PGL_3(\CC)/\cD)/\la\tau\ra) =&\,    (q^4+q^3+2q^2+q+1)(q^2+q+1) \\ 
&- \Big((q^2+q+1) + (q^2+q+1)((q^2+q+1)(q+1)-(q+1)) \Big) \\
 =&\,  q^6+q^5+q^4.
\end{align*}

All together, understanding $S_2 = \langle\tau\rangle$, this gives
 \begin{align}\label{eqn:PGLD}
 e_{S_3}(\PGL_3(\CC)/\cD) &= q^6 T+q^3 S + (q^5+q^4)D, \\
 e_{S_2}(\PGL_3(\CC)/\cD) &= (q^6+q^5+q^4) T+ (q^5 + q^4 + q^3)N. \label{eqn:PGLD-S2}
 \end{align}

Finally, for the action of $S_3$ on $\widetilde{\Stab}(\xi_1)= \SL_3(\CC)$ and the action of $S_2$ on $\widetilde{\Stab}(\xi_4) \cong \GL_2(\CC)$,
by Proposition \ref{prop:lemma-connected-group} we have that
\begin{align} \label{eq:e-pols-equiv-SL-GL}
	 e_{S_3}(\SL_3(\CC)) = (q^8 - q^6 - q^5 + q^3)T, \qquad e_{S_2}(\GL_2(\CC)) = (q^4-q^3-q^2+q) T .
\end{align}

\begin{remark}
There is a locally trivial fibration $\cD \to \SL_3(\CC)  \to \PGL_3(\CC)/\cD$. This is an equivariant fibration, which can be seen 
as follows: the base $B=\PGL_3(\CC)/\cD$ parametrizes subsets of $3$ non-collinear points in $\PP^2$, which is an open subset
of $\Sym^3(\PP^2)$. Given one triplet, 
there is a line $\ell\subset \PP^2$ missing it, hence it lies in $\Sym^3 (\PP^2- \ell)$. The fibration over $B\cap \Sym^3 (\PP^2- \ell)$ is trivial and $S_3$-invariant, which shows the claim.
In particular, it holds $e_{S_3}(\SL_3(\CC))= e_{S_3}(\cD) \otimes e_{S_3}(\PGL_3(\CC)/\cD)$. 
This can also be checked from (\ref{eqn:D}) and (\ref{eqn:PGLD}). Similar 
observations apply for $\GL_2(\CC)$.
\end{remark}

\subsection{Adding up all contributions}
Now we move to the computation of the $E$-polynomials of the strata (\ref{eqn:ij}).
For $i=1,\ldots, 5$, we have 
 $$ 
 R_{\xi_i \to \xi_i} \cong  \Delta^3_{\sigma_{(i)} \to \sigma_{(i)}} 
 \times \big(\PGL_{3}(\CC)/ \Stab(\xi_i)\big) \times \widetilde{\Stab}(\xi_i) ,
 $$
using Lemma \ref{lem:A}, where $\sigma_{(i)}$ denotes the partition associated to $\xi_i$.
Therefore 
 \begin{itemize}
 \item $e(R_{\xi_1 \to \xi_1} )=e( \Delta^3_{\sigma_1 \to \sigma_1}) e(\PGL_3(\CC))= 3(q^3-1)(q^3-q)q^2$.
 \item $e(R_{\xi_2\to \xi_2} )=e( \Delta^3_{\sigma_1 \to \sigma_1}) e(\PGL_3(\CC)) =3(q^3-1)(q^3-q)q^2$.
 \item $e(R_{\xi_3\to \xi_3} )=e( \Delta^3_{\sigma_1 \to \sigma_1}) 3\,e(\PGL_3(\CC))= 9(q^3-1)(q^3-q)q^2$.
 \item $e(R_{\xi_4 \to \xi_4} )=e( \Delta^3_{\sigma_2 \to \sigma_2}) e(\PGL_3(\CC)) =(q-3n-1)(q^3-1)(q^3-q)q^2$.
 \item $e(R_{\xi_5 \to \xi_5} )=e( \Delta^3_{\sigma_2 \to \sigma_2}) e(\PGL_3(\CC)) =(q-3n-1)(q^3-1)(q^3-q)q^2$.
 \end{itemize}

The remaining five strata are analyzed one by one:
\begin{itemize}

\item $R_{\xi_5 \to \xi_2} \cong  \Delta^3_{\sigma_2 \to \sigma_1} 
 \times \big(\PGL_{3}(\CC)/ \Stab(\xi_5)\big) \times \widetilde{\Stab}(\xi_2)$,
hence 
  \begin{align*}
 e(R_{\xi_5 \to \xi_2}) &=e(  \Delta^3_{\sigma_2 \to \sigma_1}) e\big(\PGL_{3}(\CC)/ \Stab(\xi_5)\big) e(\widetilde{\Stab}(\xi_2)) \\
  &=(3n-3) (q^3-1)(q^3+q^2)(q-1)q^3.
  \end{align*} 

\item $R_{\xi_4 \to \xi_1} \cong  \Delta^3_{\sigma_2 \to \sigma_1} 
 \times \big(\PGL_{3}(\CC)/ \Stab(\xi_4)\big) \times \widetilde{\Stab}(\xi_1)$, therefore
  \begin{align*}
 e(R_{\xi_4 \to \xi_1}) &=e(  \Delta^3_{\sigma_2 \to \sigma_1}) e\big(\PGL_{3}(\CC)/ \Stab(\xi_4)\big) e(\widetilde{\Stab}(\xi_1)) \\
  &=(3n-3) (q^2+q+1)q^2(q^3-1)(q^2-1)q^3.
  \end{align*}

\item $R_{\xi_6 \to \xi_1} \cong \tilde R_{\xi_6 \to \xi_1}/S_3$, where
 $\tilde R_{\xi_6 \to \xi_1} \cong  \Delta^3_{\sigma_3 \to \sigma_1}  \times \big(\PGL_{3}(\CC)/ \Stab(\xi_6)\big) \times \widetilde{\Stab}(\xi_1) =  \Delta^3_{\sigma_3 \to \sigma_1}  \times \big(\PGL_{3}(\CC)/ \cD\big) \times \SL_3(\CC)$.
By (\ref{eq:sigma3-sigma3-S3}), (\ref{eqn:PGLD}) and (\ref{eq:e-pols-equiv-SL-GL}), we have 
  \begin{align*}
 e_{S_3}(\tilde R_{\xi_6 \to \xi_1}) &= 
 \left( \frac{n^2 - 3n + 2}2 T + \frac{n^2 - 3n + 2}2S + (n^2 - 3n + 2)D \right) \\ & \qquad
 \ox ( q^6 T+q^3 S + (q^5+q^4)D ) \ox (q^8 - q^6 - q^5 + q^3)T .
  \end{align*} 
 Taking the $T$-component,
  \begin{align*}
 e(R_{\xi_6 \to \xi_1}) &= \frac{n^2-3n+2}{2} (q^3-1)^2 (q+1)^2 q^6 .
  \end{align*} 

\item $R_{\xi_6 \to \xi_4} \cong \tilde R_{\xi_6 \to \xi_4}/S_2$, where
 $R_{\xi_6 \to \xi_4} \cong  \Delta^3_{\sigma_3 \to \sigma_2}  \times \big(\PGL_{3}(\CC)/ \Stab(\xi_6)\big) \times \widetilde{\Stab}(\xi_4)$.
By (\ref{eq:sigma3-sigma2-s2}), (\ref{eqn:PGLD-S2}) and (\ref{eq:e-pols-equiv-SL-GL}) we have
  \begin{align*}
 e_{S_2}(\tilde R_{\xi_6 \to \xi_4}) &=  
 \left(\left(\floor{\frac{n}{2}}(q-1) - \frac{3n(n-1)}2 \right) T 
   + \left(\floor{\frac{n-1}{2}}(q-1) - \frac{3n(n-1)}2 \right)N\right)\\
  & \qquad \ox \left((q^6+q^5+q^4) T+ (q^5 + q^4 + q^3)N\right) \ox (q^4-q^3-q^2+q) T.
  \end{align*} 
  Taking the $T$-component,
  \begin{align*}
 e(R_{\xi_6 \to \xi_4}) = & \, q^4 (q^3-1)(q^2-1) 
 \Big( \floor{\frac{n}2}(q-1)^2 -\frac{3n^2-5n+2}2 (q-1) -3n(n-1)\Big).
%
%
\end{align*}

\item  $R_{\xi_6 \to \xi_6} =\tilde R_{\xi_6 \to \xi_6}/S_3$, where 
 $\tilde R_{\xi_6\to \xi_6} \cong  \Delta^3_{\sigma_3 \to \sigma_3} \times \big(\PGL_{3}(\CC)/ \cD\big) \times \cD$.
By (\ref{eq:sigma3-sigma3-s3}), (\ref{eqn:PGLD}) and (\ref{eqn:D}) we have
  \begin{align*}
 e_{S_3}( &\tilde R_{\xi_6 \to \xi_6}) = \Big(
  \left(q^{2} - q  - \floor{\frac{n}{2}} (q-1)  + n^2 \right) T 
	- \left(\floor{\frac{n-1}{2}} (q-1)- n^2  \right) S  \\ 
	 &- \left((n+1) (q-1) - 2n^2 \right)D \Big) \ox \left(q^6 T+q^3 S + (q^5+q^4)D\right) 
	\ox \left(q^2 T+ S -q D\right).
	  \end{align*}
   
  Taking the $T$-component,
  \begin{align*}
 e(R_{\xi_6 \to \xi_6}) &=   \left(q^{2} - q  - \floor{\frac{n}{2}} (q-1)  + n^2 \right) (q^8-q^6-q^5+q^3)
\end{align*}
\end{itemize}

Adding up all the contributions, we finally get
\begin{align*}
e\big(R(H_n, & \SL_{3}(\CC))\big) =\,  (q^3-1)(q^2-1)q^2 \Big( \floor{\frac{n}{2}} (q^2-q)(q^2-q-1) \\
 & +\frac12n^2 (q^7+2q^6+2q^5+q^4-3q^3-3q^2+2q) 
 -\frac12n (3q^7+6q^6-3q^4 -17q^3 \\ & \qquad -q^2+12q)  
 + q^7+2q^6-q^5-2q^4-6q^3+2q^2+13q\Big).
%
%
%
\end{align*}

\section{Rank $3$ character variety of the twisted Hopf link}\label{sec:rank3-character}

We end up with the computation of $e(\frM(H_n,G))$, for 
$G=\SL_3(\CC)$. First, we deal with reducible representations $(A,B)$. The ones of type $(1,1,1)$ are 
the direct sums of three one-dimensional representations. This means that
 $$
 (A,B)=  \left(\left(\begin{array}{ccc} \lambda_1 &0 & 0 \\ 0 & \lambda_2 &0 \\ 0 &0&\lambda_3 \end{array} \right) , 
 \left(\begin{array}{ccc} \mu_1 &0 & 0 \\ 0 & \mu_2 &0 \\ 0 &0&\mu_3 \end{array} \right)\right) , 
$$
which is parametrized by $(\cD)^2/S_3$. Using (\ref{eqn:D}), we take the $T$-component of 
$e_{S_3}((\cD)^2)=e_{S_3}(\cD)^2=(q^2T+S-qD)^2$, which is 
 $$
  e(\frM^{\red}_{1,1,1}(H_n,G))= e((\cD)^2/S_3)= q^4+ q^2 + 1\,.
  $$
  
Next, we consider the reducible representations of type $(2,1)$. Then 
 $$
 (A,B)=  \left(\left(\begin{array}{c|c} A_1 &0  \\ \hline 0 & \lambda_1 \end{array} \right) , 
 \left(\begin{array}{c|c} B_1 &0  \\ \hline 0 & \mu_1 \end{array} \right) \right) , 
$$
where $\lambda_1=(\det A_1)^{-1}$, $\mu_1=(\det B_1)^{-1}$, and $(A_1,B_1)$ is an irreducible
$\GL_2(\CC)$-represen\-ta\-tion. The computation is similar to the case of $\SL_2(\CC)$ in Section \ref{sec:rank2-character}.
Lemma \ref{lem:reducible} applies here, and we only have to see at the reductions $\xi_3\to \xi_1$. Therefore
$(A_1,B_1)$ can be put on the form (\ref{eqn:ABirred}), where $\lambda \in \CC^*$, $\varepsilon \in \mu_n^*$, 
 $bc\neq 0$, $ad-bc\neq 0$. Here, we find two options.
 \begin{enumerate}
 	\item  If $\varepsilon \neq -1$ (which happens always if $n$ is odd), then 
 the $S_2$-action sends $\varepsilon\mapsto \varepsilon^{-1}$. The quotient of one of such
 sets is parametrized by $(\GL_2(\CC)-\{bc=0\})/\CC^*$, whose $E$-polynomial is $(q^2-q+1)(q-1)=q^3-2q^2+2q-1$.
 	\item  If $\varepsilon=-1$ then we have to quotient by the swap of the eigenvalues, which yields the space
 $((\GL_2(\CC)-\{bc=0\})/\CC^*)/S_2$. Let us consider the fibration
 $$
 	F=\{(a,d) \,|\, ad \neq bc+1\} \to \GL_2(\CC)-\{bc=0\} \to B=\{(b,c) \,|\, bc\neq 0\}.
$$
 Note that the action of $\CC^*$ is only on $B$, $(b,c)\mapsto (b\varpi^2, c\varpi^{-2})$. The quotient
 is $B/\CC^* \cong \CC^*$ with trivial $S_2$-action, whence $e_{S^2}(B/\CC^*)=(q-1)T$.
 For the fiber, $S_2$ swaps $(a,d)$, hence the quotient is parametrized by $s=a+d$, $p=ad \neq bc+1$, so
 $e(F/S_2)=q(q-1)$, and thus $e_{S^2}(F)=(q^2-q)T+N$. Therefore we get
 \begin{align*}
 	e_{S_2}((\GL_2(\CC)-\{bc=0\})/\CC^*) &= e_{S_2}(B/\CC^*) e_{S_2}(F) \\
	&=(q^3-2q^2+q)T+ (q-1)N.
 \end{align*}
 \end{enumerate}
 
 All together, taking into account the contribution of $\lambda \in \CC^*$, we have
 \begin{align*}
  e(\frM^{\red}_{2,1}(H_n,G)) &= (q-1)\bigg(\floor{\frac{n-1}2} (q^3-2q^2+2q-1) \\
  & \qquad + \Big(n-1-2\floor{\frac{n-1}2}\Big)(q^3-2q^2+q)\bigg) \\
   &= (n-1)(q^3-2q^2+q)(q-1) - \floor{\frac{n-1}2} (q^3 - 2q^2 + 1)(q-1) \,.
\end{align*}

Now we move to $\frM^{\irr}(H_n,G)$. 
For the irreducible representations, Lemma \ref{lem:reducible} implies that the
only non-empty strata are $R^{\irr}_{\xi_4\to\xi_1}$ and $R^{\irr}_{\xi_6\to\xi_1}$.
We start with  $(A,B)\in R^{\irr}_{\xi_6\to\xi_1}$. Choosing a suitable basis,
 $$
 (A,B)=  \left(\left(\begin{array}{ccc} \lambda_1 &0 & 0 \\ 0 & \lambda_2 &0 \\ 0 &0&\lambda_3 \end{array} \right) , 
\left(\begin{array}{ccc} a & b & c \\ d & e & f \\ g & h & i \end{array} \right) \right),
$$
modulo the action of $\cD=(\CC^*)^2$.
As $\lambda_1^n=\lambda_2^n=\lambda_3^n=\varpi$, $\varpi^3=1$, $\lambda_i\neq \lambda_j$ for $i\neq j$,
the count of matrices is given by $\Delta^3_{\sigma_3\to \sigma_1}/S_3$, i.e.\ $\frac12 (n^2-3n+2)$ points.
In order for $(A,B)$ to be irreducible, they cannot leave invariant a line (that is, no column of $B$ is the
coordinate vector) or a plane (that is, no row of $B$ is the coordinate vector). We count the contribution:
 \begin{itemize}
 \item $d\neq 0$ and $g\neq 0$. Using the action of $\cD$, we arrange $d=1, g=1$. The space of such matrices in
 $\SL_3(\CC)$ has $E$-polynomial $q(q^3-q)q^2$. Now we have to remove 
 $U_1=\{b=c=0\}$, $U_2=\{b=h=0\}$, $U_3=\{c=f=0\}$. Denote $U_{ij}=U_i\cap U_j$. Note that $U_{23}=U_{123}$.
 The contribution is: 
\begin{align*} 
 e(U_1\cup U_2 &\cup U_3 ) =  e(U_1)+ e(U_2)+ e(U_3 ) - e(U_{12})- e(U_{13}) \\ 
 &= (q^2-1)(q^2-q) + q^2(q^2-q) + q^2(q^2-q) - q(q-1)^2- q(q-1)^2 \\
 &= 3 q^{4} - 5  q^{3} + 3 q^{2} - q.
 \end{align*}
Hence the $E$-polynomial of this stratum is $q^{6} - 4 q^{4} + 5 q^{3} - 3 q^{2} + q$.
 
 \item  $d\neq 0$ and $g=0$. It must be $h\neq 0$. We can arrange $d=1$, $h=1$. 
Then $B$ has determinant $c+aei-bi-af=1$, so $c$ is fixed.
Therefore the space of such matrices in $\SL_3(\CC)$ has $E$-polynomial $q^5$.
Now we remove $U_1=\{b=c=0\}$, $U_2=\{c=0,f=0\}$.
 The contribution is: 
\begin{align*} 
 e(U_1\cup U_2 ) &=  e(U_1)+ e(U_2) - e(U_{12}) \\&= q(q^2-q) + q(q^2-q)-(q-1)^2 \\  &=2q^3-3q^2+2q-1.
  \end{align*}
Thus the $E$-polynomial of this stratum is $q^5-2q^3+3q^2-2q+1$.

 \item  $d=0$, $g\neq 0$. This is analogous to the previous one. It has $E$-polynomial $q^5-2q^3+3q^2-2q+1$.
\end{itemize}
Adding up,
  $$
  e(\frM^{\irr}_{\xi_6\to\xi_1} )= \frac12 (n^2-3n+2)(q^{6} + 2 q^{5} - 4 q^{4} + q^{3} + 3  q^{2} - 3 q + 2).
  $$

We end up with  $(A,B)\in R^{\irr}_{\xi_4\to\xi_1}$. Choosing a suitable basis,
 $$
 (A,B)=  \left(\left(\begin{array}{ccc} \lambda &0 & 0 \\ 0 & \lambda &0 \\ 0 &0&\lambda\varepsilon \end{array} \right) , 
 \left(\begin{array}{c|c} B_1 & \begin{array}{c} c \\ f \end{array} \\ \hline g \,\,\,  h & i \end{array} \right) \right),
$$
where $\varepsilon=\lambda^{-3}$, $\lambda\in \mu_{3n}- \mu_3$. This space is 
modulo the action of $\PGL_2(\CC)\times \CC^*$.
The action of $\PGL_2(\CC)$ conjugates $B_1$, therefore we can put it in Jordan form. There are two options:
 \begin{itemize}
 \item $B_1$ is diagonalizable. Therefore we can put 
 $$
B=\left(\begin{array}{ccc} a & 0 & c \\ 0 & e & f \\ g & h & i \end{array} \right) .
$$
It must be $g\neq 0$, $h\neq 0$, $c\neq 0$ and $f\neq 0$. With the residual action of $\cD=\CC^*\x \CC^*$, we 
can arrange $g=1$, $h=1$. No eigenvector of $A$ of the form $(x,y,0)$ should be eigenvector of $B$, which translates into
$a\neq e$. Also, no invariant plane of the form $\la (x,y,0),(0,0,1)\ra$ should be invariant for $B$, which also means $a\neq e$.
 Now we distinguish two cases:
 \begin{enumerate}
 \item $a,e\neq 0$. Then the condition $\det B=1$ says that $i$ is given in terms of $(a,e,c,f) \in ((\CC^*)^2-\Delta)\x (\CC^*)^2$. There is an
 action of $S_2$ swapping eigenvalues of $B_1$, that is $(a,e,c,f) \mapsto (e,a, f,c)$. 
 The equivariant $E$-polynomials are: $e_{S_2}((\CC^*)^2-\Delta) =(q^2-2q+1)T-(q-1)N$,
 $e_{S_2}((\CC^*)^2=(q^2-q)T-(q-1)N$. This gives the final $E$-polynomial
 $(q^2-q)(q^2-2q+1)+(q-1)^2= q^4-3q^3+4q^2-3q+1$.
  \item $a=0$, $e\neq 0$ (and there is no swapping of eigenvalues now). Then the parameters are $(c,f,i)\in (\CC^*)^2\x \CC$, 
  and $e=c^{-1}$. The $E$-polynomial is $(q-1)^2q$.
  \end{enumerate}
  
  \item $B_1$ is not diagonalizable. Therefore we can put
 $$
B=\left(\begin{array}{ccc} a & 0 & c \\ 1 & a & f \\ g & h & i \end{array} \right) .
$$
 There is a residual action of  $\tiny \left(\begin{array}{ccc} 1 & 0 & 0 \\ x & 1 & 0 \\ 0 & 0 & y \end{array} \right)$. 
 As it must be $h\neq 0$, we can arrange $h=1$, and also $g=0$. The irreducibility means that $(0,1,0),(0,0,1)$ are not eigenvectors of
 $B$, and $\la (1,0,0),(0,1,0)\ra$ and $\la (0,1,0), (0,0,1)\ra$ are not invariant planes of $B$. This translates into $c\neq 0$.
 The determinant condition is $\det B= a^2 i+c-af =1$, so $c$ is determined, and the space is $\{(a,i,f) | \, a^2i-af\neq 1\}$.
 For $a\neq 0$, this is $\CC^2- \CC$; and for $a=0$, it is $\CC^2$. So the $E$-polynomial is
 $(q-1)(q^2-q)+q^2=q^3-q^2+q$.
\end{itemize} 
 
Adding up, this amounts to $q^4-q^3+q^2-q+1$, and taking into account the possible values of $\lambda$ we get
  $$
  e(\frM^{\irr}_{\xi_4\to\xi_1} )=(3n-3) (q^4-q^3+q^2-q+1).
   $$

Putting all together we finally get
\begin{align*}
e(\frM(H_n, \SL_3(\CC))) &=  q^4 + q^2 + 1 + \frac{1}{2}(n^2 - 3n + 2)\left(q^6 + 2q^5 - 4q^4 + q^3 + 3q^2 - 3q + 2\right) \\
& \qquad + 3(n - 1)(q^4 - q^3 + q^2 - q + 1) + (n - 1)(q-1)\left(q^3 - 2q^2 + q \right) \\
& \qquad -  \floor{\frac{n-1}2} (q^3 - 2q^2 + 1)(q-1).
\end{align*}

\begin{remark}
There is an error in the calculation of the $E$-polynomial of $\frM^{\red}_{2,1}(H_n,G)$ in the published version of this manuscript. In the case $\varepsilon=-1$, the equivariant $E$-polynomial of $B/\CC^*$ should be $e_{S^2}(B/\CC^*)=(q-1)T$, and not $e_{S^2}(B/\CC^*)=qT-N$ as computed in previous versions. The current version of the manuscript corrects this mistake and recomputes the final $E$-polynomial.
\end{remark}

\subsection*{Data availability statement}

This work has no associate data.


\begin{thebibliography}{11}
%

\bibitem{cavazos2014polynomial}
S.~Cavazos and S.~Lawton,
\emph{E-polynomial of $\SL_2(\mathbb{C})$-character varieties of free groups}, {Internat. J. Math.} {\bf 25} (2014) 1450058 .

\bibitem{ChenYu}
H.~Chen and T.~Yu,
\textit{The $\SL(2, \mathbb{C})$-character variety of the Borromean link}, arXiv:2202.07429.

\bibitem{CS} M. Culler and P. Shalen, \emph{Varieties of group representations and
splitting of $3$-manifolds}, {Annals Math. (2)} \textbf{117} (1983) 109--146.

\bibitem{CCGLS} 
D. Cooper, M. Culler, H. Gillet, D. Long and P. Shalen, \emph{Plane curves associated to character varieties of 3-manifolds}. {Invent. Math.} 
\textbf{118} (1994) 47--84.


\bibitem{De} P. Deligne,
Th\'eorie de Hodge II, Publ. Math. I.H.E.S. 40 (1971) 5--57.

%

\bibitem{florentino2009topology}
C.~Florentino and S.~Lawton, \textit{The topology of moduli spaces of free group representations}, {Math. Ann. (2)} \textbf{345} (2009) 453--489.

\bibitem{Florentino-Lawton:2012}
C. Florentino and S. Lawton, \emph{Singularities of free group character varieties}, {Pac. J. Math.} {\bf 260} (2012) 149--179.

\bibitem{Florentino-Nozad-Zamora:2019}
C. Florentino, A. Nozad and A. Zamora, \textit{$E$-polynomials of $\SL_n$ and $\PGL_n$-character varieties of free groups}, arXiv:1912.05852.


\bibitem{FlorentinoSilva}
C.~Florentino and J.~Silva,
\textit{Hodge-Deligne polynomials of character varieties of free abelian groups}, {\em Open Mathematics} \textbf{19} (2021) 338--362.

\bibitem{GP-2018b}
A.~Gonz\'{a}lez-Prieto, \textit{Pseudo-quotients of algebraic actions and their applications to character varieties}, arXiv:1807.08540.

\bibitem{GPLM-2017}
A.~Gonz\'alez-Prieto, M.~Logares and V.~Mu\~noz, \textit{A lax monoidal {T}opological {Q}uantum {F}ield {T}heory for  
representation varieties}, Bulletin Sciences Math. \textbf{161} (2020) 102871.

\bibitem{gonzalez2020motive}
A.~Gonz{\'a}lez-Prieto and V.~Mu{\~n}oz,
\textit{Motive of the $SL_4$-character variety of torus knots}, J. Algebra, 2022, doi.org/10.1016/j.jalgebra.2022.06.008

\bibitem{GLM} S. Gusein-Zade, I. Luengo and A. Melle-Hern\'andez, \textsl{On the power structure over the Grothendieck ring of varieties and its applications}, Proc. Steklov Institute Math. {\bf 258} (2007) 53--64. 

%
%

\bibitem{HMP} M. Heusener, V. Mu\~noz and J. Porti, \textit{The $\SL(3,\CC)$-character variety of the figure eight knot},
Illinois Jour.\ Math. {\bf 60} (2017) 55--98. 

\bibitem{Huy} 
D. Huybrechts, \textit{Fourier-Mukai transforms in algebraic geometry}, Clarendon Press, 2006.

\bibitem{kauffman}   
L. Kauffman, \emph{Statistical mechanics and the Jones polynomial}, Contemporary Mathematics, 1988.

\bibitem{KM}   
T. Kitano and T. Morifuji, \textit{Twisted Alexander polynomials for irreducible $\SL(2,\CC)$-representations of torus knots}, 
Ann. Sc. Norm. Super. Pisa Cl. Sci. (5) \textbf{11} (2012) 395--406.

\bibitem{Lawton} S. Lawton,
\textit{Minimal affine coordinates for $\SL(3, \CC)$-character varieties of free groups},
J. Algebra \textbf{320} (2008) 3773--3810.

\bibitem{LM} S. Lawton and  V. Mu\~noz,
\emph{E-polynomial of the  $\SL(3, \CC)$-character variety of free groups}, Pac. J. Math. \textbf{282} (2016) 173--202.

\bibitem{Lawton-Sikora:2019}
S. Lawton and A. Sikora, \textit{Varieties of characters}, {Algebr. Represent. Theory} {\bf 20} (2017) 1133--1141.

\bibitem{lickorish1962representation}
W. Lickorish, \textit{A representation of orientable combinatorial $3$-manifolds}, {Annals Math.} {\bf 76} (1962) 531--540.

\bibitem{LMN} M. Logares, V. Mu\~noz and P. Newstead, 
\emph{Hodge polynomials of $\SL(2,\mathbb{C})$-character varieties for curves of small genus}, 
Rev. Mat. Complut. \textbf{26} (2013) 635--703. 

\bibitem{LuMa}
{A.~Lubotzky and A.~Magid},  
\textsl{Varieties of representations of finitely generated groups}, Mem. Amer. Math. Soc. \textbf{58} (1985). 


\bibitem{Martinez-Munoz:2015}
J. Martínez and V. Mu\~noz, \emph{The $\SU(2)$-character varieties of torus knots}, Rocky Mountain J. Math. (2) \textbf{45} (2015) 583--600.


\bibitem{Munoz} V. Mu\~noz, \textsl{The $\SL(2,\CC)$-character varieties of torus knots},
Rev. Mat. Complut. \textbf{22} (2009) 489--497.

\bibitem{MP} V. Mu\~noz and J. Porti, \textsl{Geometry of the $\SL(3,\CC)$-character variety of torus knots},
Algebraic Geometric Topology \textbf{16} (2016) 397--426. (also arXiv:1409.4784).

\bibitem{Rolfsen} D. Rolfsen, \textsl{Knots and links}, Publish or Perish, 1990.
%
%
%
  
\bibitem{witten1989quantum}
E.~Witten, \textit{Quantum field theory and the Jones polynomial}, {Comm. Math. Phys.} {\bf 121} (1989) 351--399.

\end{thebibliography}
\end{document}